
\documentclass[preprint,12pt]{elsarticle}



\usepackage{amsmath}
\usepackage{amssymb}
\usepackage{amsthm}

\newtheorem{thm}{Theorem}[section]

\newtheorem{lem}[thm]{Lemma}

\newtheorem{defin}[thm]{Definition}
\newtheorem{rem}[thm]{Remark}
\numberwithin{equation}{section}

\journal{}

\begin{document}

\begin{frontmatter}



\title{Morrey spaces for Schr\"odinger operators with certain nonnegative potentials, Littlewood-Paley and Lusin functions on the Heisenberg groups}


\author{Hua Wang}
\address{School of Mathematics and Systems Science, Xinjiang University,\\
Urumqi 830046, P. R. China}

\begin{abstract}
Let $\mathcal L=-\Delta_{\mathbb H^n}+V$ be a Schr\"odinger operator on the Heisenberg group $\mathbb H^n$, where $\Delta_{\mathbb H^n}$ is the sublaplacian on $\mathbb H^n$ and the nonnegative potential $V$ belongs to the reverse H\"older class $RH_q$ with $q\geq Q/2$. Here $Q=2n+2$ is the homogeneous dimension of $\mathbb H^n$. Assume that $\{e^{-s\mathcal L}\}_{s>0}$ is the heat semigroup generated by $\mathcal L$. The Littlewood-Paley function $\mathfrak{g}_{\mathcal L}$ and the Lusin area integral $\mathcal{S}_{\mathcal L}$ associated with the Schr\"odinger operator $\mathcal L$ are defined, respectively, by
\begin{equation*}
\mathfrak{g}_{\mathcal L}(f)(u):=\bigg(\int_0^{\infty}\bigg|s\frac{d}{ds}e^{-s\mathcal L}f(u)\bigg|^2\frac{ds}{s}\bigg)^{1/2}
\end{equation*}
and
\begin{equation*}
\mathcal{S}_{\mathcal L}(f)(u):=\bigg(\iint_{\Gamma(u)}\bigg|s\frac{d}{ds}e^{-s\mathcal L}f(v)\bigg|^2\frac{dvds}{s^{Q/2+1}}\bigg)^{1/2},
\end{equation*}
where
\begin{equation*}
\Gamma(u):=\big\{(v,s)\in\mathbb H^n\times(0,\infty):|u^{-1}v|<\sqrt{s\,}\big\}.
\end{equation*}
In this paper the author first introduces a class of Morrey spaces associated with the Schr\"odinger operator $\mathcal L$ on $\mathbb H^n$. Then by using some pointwise estimates of the kernels related to the nonnegative potential $V$, the author establishes the boundedness properties of these two operators $\mathfrak{g}_{\mathcal L}$ and $\mathcal{S}_{\mathcal L}$ acting on the Morrey spaces. It can be shown that the same conclusions also hold for the operators $\mathfrak{g}_{\sqrt{\mathcal L}}$ and $\mathcal{S}_{\sqrt{\mathcal L}}$ with respect to the Poisson semigroup $\{e^{-s\sqrt{\mathcal L}}\}_{s>0}$.
\end{abstract}

\begin{keyword}

Schr\"odinger operator \sep Littlewood-Paley function \sep Lusin area integral \sep Heisenberg group \sep Morrey spaces \sep reverse H\"older class
\MSC[2010] Primary 42B20 \sep 35J10 \sep Secondary 22E25 \sep 22E30
\end{keyword}

\end{frontmatter}

\section{Introduction}

\subsection{The Heisenberg group $\mathbb H^n$}
This paper deals with Morrey spaces for Schr\"odinger operators with certain nonnegative potentials and Littlewood-Paley functions on the Heisenberg groups. The Heisenberg group is the most well-known example from the realm of nilpotent Lie groups and plays an important role in several branches of mathematics, such as representation theory, partial differential equations, several complex variables and harmonic analysis. It is a remarkable fact that the Heisenberg group, an important example of the simply-connected nilpotent Lie group, naturally arises in two fundamental but different
settings in modern analysis. On the one hand, it can be identified with the group of translations of the Siegel upper half space in $\mathbb C^{n+1}$ and plays an important role in our understanding of several problems in the complex function theory of the unit ball. On the other hand, it can also be realized as the group of unitary operators generated by the position and momentum operators in the context of quantum mechanics.

We begin by recalling some notions from \cite{folland3,stein2}. We write $\mathbb N=\{1,2,3,\dots\}$ for the set of natural numbers. The sets of real and complex numbers are denoted by $\mathbb R$ and $\mathbb C$, respectively. Let $\mathbb H^n$ be a \emph{Heisenberg group} of dimension $2n+1$; that is, a two-step nilpotent Lie group with underlying manifold $\mathbb C^n\times\mathbb R$. The group structure (the multiplication law) is given by
\begin{equation*}
(z,t)\cdot(z',t'):=\big(z+z',t+t'+2\mathrm{Im}(z\cdot\overline{z'})\big),
\end{equation*}
where $z=(z_1,z_2,\dots,z_n)$, $z'=(z_1',z_2',\dots,z_n')\in\mathbb C^n$, and $z\cdot\overline{z'}:=\sum_{j=1}^nz_j\overline{z_j'}$.
Under this multiplication $\mathbb H^n$ becomes a nilpotent unimodular Lie group. It is easy to see that the inverse element of $u=(z,t)\in \mathbb H^n$ is $u^{-1}=(-z,-t)$, and the identity is the origin $0=(0,0)$. The corresponding Lie algebra $\mathfrak{h}^n$ of left-invariant vector fields on $\mathbb H^n$ is spanned by
\begin{equation*}
\begin{cases}
X_j:=\frac{\partial}{\partial x_j}+2y_j\frac{\partial}{\partial t},\quad j=1,2,\dots,n;&\\
Y_j:=\frac{\partial}{\partial y_j}-2x_j\frac{\partial}{\partial t},\quad j=1,2,\dots,n;&\\
T:=\frac{\partial}{\partial t}.&
\end{cases}
\end{equation*}
All non-trivial commutation relations are given by
\begin{equation*}
[X_j,Y_j]=-4T,\quad j=1,2,\dots,n.
\end{equation*}
Here $[\cdot,\cdot]$ is the usual Lie bracket. The \emph{sublaplacian} $\Delta_{\mathbb H^n}$ is defined by
\begin{equation*}
\Delta_{\mathbb H^n}:=\sum_{j=1}^n\big(X_j^2+Y_j^2\big).
\end{equation*}
The Heisenberg group has a natural dilation structure which is consistent with the Lie group structure mentioned above. For each positive number $a>0$, we define the \emph{dilation} on $\mathbb H^n$ by
\begin{equation*}
\delta_a(z,t):=(az,a^2t),\quad (z,t)\in \mathbb H^n.
\end{equation*}
Observe that $\delta_a$ ($a>0$) is an automorphism of the group $\mathbb H^n$. For given $u=(z,t)\in\mathbb H^n$, the \emph{homogeneous norm} of $u$ is given by
\begin{equation*}
|u|=|(z,t)|:=\big(|z|^4+t^2\big)^{1/4}.
\end{equation*}
Observe that $|(z,t)^{-1}|=|(z,t)|$ and
\begin{equation*}
\big|\delta_a(z,t)\big|=\big(|az|^4+(a^2t)^2\big)^{1/4}=a|(z,t)|,\quad a>0.
\end{equation*}
In addition, this norm $|\cdot|$ satisfies the triangle inequality and then leads to a left-invariant distance $d(u,v)=|u^{-1}\cdot v|$ for any $u=(z,t)$, $v=(z',t')\in\mathbb H^n$. If $r>0$ and $u\in\mathbb H^n$, let $B(u,r)=\{v\in\mathbb H^n:d(u,v)<r\}$ be the (open) ball with center $u$ and radius $r$.
The Haar measure on $\mathbb H^n$ coincides with the Lebesgue measure $dzdt$ on $\mathbb C^n\times\mathbb R=\mathbb R^{2n}\times\mathbb R$. The measure of any measurable set $E\subset\mathbb H^n$ is denoted by $|E|$. For $(u,r)\in\mathbb H^n\times(0,\infty)$, it can be proved that the volume of $B(u,r)$ is
\begin{equation}\label{sqrts}
|B(u,r)|=r^{Q}\cdot|B(0,1)|,
\end{equation}
where $Q:=2n+2$ is the \emph{homogeneous dimension} of $\mathbb H^n$ and $|B(0,1)|$ is the volume of the unit ball in $\mathbb H^n$. A direct calculation shows that
\begin{equation*}
|B(0,1)|=\frac{2\pi^{n+\frac{\,1\,}{2}}\Gamma(\frac{\,n\,}{2})}{(n+1)\Gamma(n)\Gamma(\frac{n+1}{2})}.
\end{equation*}
In the sequel, when $B=B(u,r)$ in $\mathbb H^n$ and $\lambda>0$, we shall use the notation $\lambda B$ to denote the ball with the same center $u$ and radius $\lambda r$.Clearly, we have
\begin{equation}\label{homonorm}
|B(u,\lambda r)|=\lambda^{Q}\cdot|B(u,r)|,\quad (u,r)\in\mathbb H^n\times(0,\infty),\;\lambda\in(0,\infty).
\end{equation}
For various aspects of harmonic analysis on the Heisenberg group, we refer the readers to \cite[Chapter XII]{stein2}, \cite{folland}, \cite{thangavelu} and the references therein.

\subsection{The Schr\"odinger operator $\mathcal L$}
A nonnegative locally $L^q$ integrable function $V$ on $\mathbb H^n$ is said to belong to the \emph{reverse H\"older class} $RH_q$ for $1<q<\infty$, if there exists a positive constant $C=C(q;V)$ such that the reverse H\"older inequality
\begin{equation*}
\left(\frac{1}{|B|}\int_B V(w)^q\,dw\right)^{1/q}\leq C\cdot\left(\frac{1}{|B|}\int_B V(w)\,dw\right)
\end{equation*}
holds for every ball $B$ in $\mathbb H^n$. In this article we will always assume that $0\not\equiv V\in RH_q$ for $q\geq Q/2$ and $Q=2n+2$. We now consider the \emph{Schr\"odinger operator} with the potential $V\in RH_q$ on $\mathbb H^n$ (see \cite{lin}):
\begin{equation*}
\mathcal L:=-\Delta_{\mathbb H^n}+V.
\end{equation*}
In recent years, the investigation of Schr\"{o}dinger operators on the Euclidean space $\mathbb R^n$ with nonnegative potentials which belong to the reverse H\"{o}lder class has
attracted a lot of attention; see, for example, \cite{dziu1,dziu2,dziu3,dziu,hofman,shen}. For the weighted cases, see \cite{bong1,bong2,bong3,bong4,bui,song,tang}.
As in \cite{lin}, for given $V\in RH_q$ with $q\geq Q/2$, we introduce the \emph{critical radius function} $\rho(u)=\rho(u;V)$ which is defined by
\begin{equation}\label{rho}
\rho(u):=\sup\bigg\{r\in(0,\infty):\frac{1}{r^{Q-2}}\int_{B(u,r)}V(w)\,dw\leq1\bigg\},\quad u\in\mathbb H^n,
\end{equation}
where $B(u,r)$ denotes the ball in $\mathbb H^n$ centered at $u$ and with radius $r$. It is well known that the auxiliary function $\rho(u)$ determined by $V\in RH_q$ satisfies
\begin{equation*}
0<\rho(u)<\infty
\end{equation*}
for any given $u\in\mathbb H^n$ (see \cite{lin,lu}). We need the following known result concerning the critical radius function \eqref{rho}.
\begin{lem}[\cite{lu}]\label{N0}
If $V\in RH_q$ with $q\geq Q/2$, then there exist constants $C_0\geq 1$ and $N_0>0$ such that, for any $u$ and $v$ in $\mathbb H^n$,
\begin{equation}\label{com}
\frac{\,1\,}{C_0}\left[1+\frac{|v^{-1}u|}{\rho(u)}\right]^{-N_0}\leq\frac{\rho(v)}{\rho(u)}\leq C_0\left[1+\frac{|v^{-1}u|}{\rho(u)}\right]^{\frac{N_0}{N_0+1}}.
\end{equation}
\end{lem}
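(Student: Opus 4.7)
The plan is to adapt Shen's auxiliary-function argument \cite{shen} from the Euclidean setting to the Heisenberg group, essentially as done in \cite{lu}. The key input is that the reverse H\"older condition $V\in RH_q$ with $q\geq Q/2$ provides a power-type control on how the normalised average $I(u,r):=r^{-(Q-2)}\int_{B(u,r)}V(w)\,dw$ scales in $r$. By the defining property \eqref{rho}, the critical radius $\rho(u)$ is characterised by $I(u,\rho(u))=1$; hence any two-sided scaling estimate for $I(u,\cdot)$, combined with a comparison between $I(u,\cdot)$ and $I(v,\cdot)$ at nearby points, will immediately yield the two-sided comparison of $\rho(u)$ and $\rho(v)$ asserted in \eqref{com}.

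The first step is to establish the upper scaling bound
\begin{equation*}
I(u,r)\leq C\,(r/R)^{2-Q/q}\,I(u,R),\qquad 0<r\leq R<\infty,
\end{equation*}
by H\"older's inequality on $B(u,r)\subset B(u,R)$ followed by the reverse H\"older inequality on $B(u,R)$, together with the volume identity \eqref{homonorm}. By the self-improving property of the $RH_q$ class, one has $V\in RH_{q+\varepsilon}$ for some $\varepsilon>0$, so one may assume $q>Q/2$ and the exponent $2-Q/q>0$. Specialising at the critical scale gives $I(u,r)\leq C(r/\rho(u))^{2-Q/q}$ for $r\leq\rho(u)$. Complementarily, the doubling property of the measure $V(w)\,dw$ (a consequence of $V\in A_\infty$) yields a power-type lower bound $I(u,R)\geq c\,(R/\rho(u))^{l_0}$ for $R\geq\rho(u)$ with some $l_0>0$.

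The second step is to compare the quantities $I(u,\cdot)$ and $I(v,\cdot)$ at the appropriate scales. Writing $d=|v^{-1}u|$, the left invariance of the metric gives $B(v,r)\subset B(u,r+d)$ and $B(u,r)\subset B(v,r+d)$, so the corresponding integrals of $V$ differ by at most a factor polynomial in $1+d/r$. Applying these inclusions together with the scaling estimates of the first step at $r=\rho(v)$, where $I(v,\rho(v))=1$, traps $\rho(v)$ inside an explicit window around $\rho(u)$ whose width is polynomial in $1+d/\rho(u)$. The asymmetric exponents $-N_0$ and $N_0/(N_0+1)$ arise by choosing the comparison radius optimally and iterating the bound once, exactly as in Shen's original Lemma~1.4.

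The main obstacle is not analytic but combinatorial: careful bookkeeping of the asymmetric roles of $u$ and $v$ is needed to obtain the sharp exponent $N_0/(N_0+1)$ on the upper side of \eqref{com} rather than a symmetric $-N_0$ on both sides. Beyond this, the Heisenberg structure enters only through the triangle inequality for the homogeneous norm $|\cdot|$ and the scaling identity \eqref{homonorm}, so the Euclidean argument of \cite{shen} transfers \emph{verbatim}, as observed in \cite{lu}.
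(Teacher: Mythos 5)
The paper offers no proof of Lemma \ref{N0}: it is quoted from Lu \cite{lu} (see also \cite[Lemma 4]{lin}), the Euclidean original being Shen's Lemma 1.4 \cite{shen}, and your outline reproduces exactly the architecture of that standard argument — the H\"older/reverse-H\"older scaling estimate for $I(u,r)=r^{-(Q-2)}\int_{B(u,r)}V(w)\,dw$, the self-improvement of $RH_q$ to handle $q=Q/2$, the ball-inclusion comparison $B(v,t)\subset B(u,t+d)$ with $d=|v^{-1}u|$, and the bootstrap (apply the $-N_0$ bound with $u$ and $v$ interchanged and solve the resulting inequality for $\rho(v)$) that produces the asymmetric exponent $N_0/(N_0+1)$ — and it does transfer to $\mathbb H^n$ because the Kor\'anyi norm satisfies a genuine triangle inequality and the volume identity \eqref{sqrts} holds exactly. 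The one imprecision is a swapped attribution of the two growth bounds: the \emph{lower} bound $I(u,R)\gtrsim (R/\rho(u))^{2-Q/q}$ for $R\geq\rho(u)$ is just your first-step scaling inequality read in reverse (the $A_\infty$ route need not give a positive exponent), whereas the doubling of the measure $V(w)\,dw$ supplies the complementary \emph{upper} growth bound $I(u,R)\lesssim (R/\rho(u))^{\mu}$, which is what is actually used to dominate $I(v,T)$ at the scale $T\approx d+\rho(u)$ and conclude $\rho(v)\gtrsim\rho(u)[1+d/\rho(u)]^{-N_0}$. Both ingredients appear in your sketch, so the outline is sound; only this bookkeeping would need to be straightened out in a full write-up.
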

Lemma \ref{N0} is due to Lu \cite{lu} (see also \cite[Lemma 4]{lin}). In the setting of $\mathbb R^n$, this result was first given by Shen in \cite[Lemma 1.4]{shen}. As a direct consequence of \eqref{com}, we can see that for each fixed $k\in\mathbb N$, the following estimate
\begin{equation}\label{com2}
\left[1+\frac{r}{\rho(u)}\right]^{-\frac{N_0}{N_0+1}}\left[1+\frac{2^kr}{\rho(u)}\right]\leq C_0\left[1+\frac{2^kr}{\rho(v)}\right]
\end{equation}
holds for any $v\in B(u,2^kr)$ with $u\in\mathbb H^n$ and $r\in(0,\infty)$, $C_0$ is the same as in \eqref{com}. Let $\mathcal L=-\Delta_{\mathbb H^n}+V$ be a Schr\"{o}dinger operator on the Heisenberg group $\mathbb H^n$, where $\Delta_{\mathbb H^n}$ is the sublaplacian and the nonnegative potential $V$ belongs to the reverse H\"{o}lder class $RH_q$ with $q\geq Q/2$, and $Q$ is the homogeneous dimension of $\mathbb H^n$. Since $V$ is nonnegative and belongs to $L^q_{\mathrm{loc}}(\mathbb H^n)$, $\mathcal L$ generates a $(C_0)$ contraction semigroup $\big\{\mathcal T^{\mathcal L}_s\big\}_{s>0}=\big\{e^{-s\mathcal L}\big\}_{s>0}$. Let $\mathcal P_s(u,v)$ denote the kernel of the semigroup $\big\{e^{-s\mathcal L}\big\}_{s>0}$.
\begin{equation*}
\mathcal T^{\mathcal L}_sf(u)=e^{-s\mathcal L}f(u)=\int_{\mathbb H^n}\mathcal P_s(u,v)f(v)\,dv,\quad f\in L^2(\mathbb H^n),~s>0.
\end{equation*}
By the \emph{Trotter product formula} (see \cite{gold}), one has
\begin{equation}\label{heat}
0\leq \mathcal P_s(u,v)\leq C\cdot s^{-Q/2}\exp\bigg(-\frac{|v^{-1}u|^2}{As}\bigg),\quad s>0.
\end{equation}
Moreover, by using the estimates of fundamental solution for the Schr\"{o}dinger operator on $\mathbb H^n$, this estimate \eqref{heat} can be improved when $V$ belongs to the reverse H\"older class $RH_q$ for some $q\geq Q/2$. The auxiliary function $\rho(u)$ arises naturally in this context.
\begin{lem}[\cite{lin}]\label{ker1}
Let $V\in RH_q$ with $q\geq Q/2$, and let $\rho(u)$ be the auxiliary function determined by $V$. For every positive integer $N\in\mathbb N$, there exists a positive constant $C_N>0$ such that, for any $u$ and $v$ in $\mathbb H^n$,
\begin{equation*}
0\leq\mathcal P_s(u,v)\leq C_N\cdot s^{-Q/2}\exp\bigg(-\frac{|v^{-1}u|^2}{As}\bigg)\bigg[1+\frac{\sqrt{s\,}}{\rho(u)}+\frac{\sqrt{s\,}}{\rho(v)}\bigg]^{-N},\quad s>0.
\end{equation*}
\end{lem}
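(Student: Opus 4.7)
My plan is to extract the factor $[1+\sqrt{s}/\rho(u)+\sqrt{s}/\rho(v)]^{-N}$ on top of the bare Gaussian bound \eqref{heat} by exploiting the way the nonnegative potential $V$ enters the evolution through Duhamel's formula. Let $p^{0}_s$ denote the heat kernel of the pure sublaplacian $-\Delta_{\mathbb H^n}$, which satisfies the standard Gaussian bound in homogeneous dimension $Q$. The perturbative identity
\begin{equation*}
\mathcal P_s(u,v)=p^{0}_s(u,v)-\int_0^s\!\!\int_{\mathbb H^n}p^{0}_{s-r}(u,w)\,V(w)\,\mathcal P_r(w,v)\,dw\,dr,
\end{equation*}
or equivalently the Feynman--Kac representation $\mathcal P_s(u,v)=p^{0}_s(u,v)\,\mathbb E^{u,v}_s\!\bigl[\exp\!\bigl(-\!\int_0^s V(\xi_r)\,dr\bigr)\bigr]$ taken against the horizontal Brownian bridge $\xi$ from $u$ to $v$, is the engine that converts local mass of $V$ near $u$ into decay of $\mathcal P_s$.

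The core step is the case $N=1$ with only $\rho(u)$ on the right. For $\sqrt{s}\leq\rho(u)$ one has $[1+\sqrt{s}/\rho(u)]^{-1}\geq 1/2$ and \eqref{heat} is already sufficient. For $\sqrt{s}>\rho(u)$, the defining equation
\begin{equation*}
\frac{1}{\rho(u)^{Q-2}}\int_{B(u,\rho(u))}V(w)\,dw=1
\end{equation*}
together with $V\in RH_{Q/2}$ and the doubling relation \eqref{homonorm} produce, through a Fefferman--Phong--Shen type argument adapted to $\mathbb H^n$, the effective lower bound
\begin{equation*}
\int_0^s\!\!\int_{\mathbb H^n}p^{0}_{s-r}(u,w)\,V(w)\,p^{0}_r(w,v)\,dw\,dr\;\gtrsim\;\frac{\sqrt{s}}{\rho(u)}\cdot s^{-Q/2}\exp\!\bigg(\!-\frac{|v^{-1}u|^2}{As}\!\bigg).
\end{equation*}
Inserting this into the Duhamel identity and using the trivial $\mathcal P_r\leq p^{0}_r$ to dominate the unknown factor yields the improvement by one power of $[1+\sqrt{s}/\rho(u)]^{-1}$.

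Once $N=1$ is in hand, the estimate for arbitrary $N\in\mathbb N$ follows by iterating the semigroup: writing
\begin{equation*}
\mathcal P_s(u,v)=\int_{(\mathbb H^n)^{N-1}}\prod_{j=0}^{N-1}\mathcal P_{s/N}(w_j,w_{j+1})\,dw_1\cdots dw_{N-1},\qquad w_0=u,\;w_N=v,
\end{equation*}
applying the $N=1$ bound to each factor, and invoking Lemma \ref{N0} in the form \eqref{com2} to replace each $\rho(w_j)$ by $\rho(u)$ at the cost of a polynomial factor $[1+|u^{-1}w_j|/\rho(u)]^{N_0/(N_0+1)}$, which is absorbed by enlarging slightly the Gaussian constant $A$. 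Convolution of the Gaussians reconstitutes $s^{-Q/2}\exp(-|v^{-1}u|^2/(As))$ and leaves a clean factor of $[1+\sqrt{s}/\rho(u)]^{-N}$. The bound with $\rho(v)$ in place of $\rho(u)$ follows from self-adjointness $\mathcal P_s(u,v)=\mathcal P_s(v,u)$; taking the geometric mean of the two estimates and invoking
\begin{equation*}
1+\frac{\sqrt{s}}{\rho(u)}+\frac{\sqrt{s}}{\rho(v)}\leq 2\max\!\left\{1+\frac{\sqrt{s}}{\rho(u)},\,1+\frac{\sqrt{s}}{\rho(v)}\right\}
\end{equation*}
delivers the symmetric statement of the lemma.

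The main obstacle I expect is the quantitative step for $N=1$ when $\sqrt{s}>\rho(u)$, namely the extraction of a full factor of $\sqrt{s}/\rho(u)$ rather than some lesser improvement. On $\mathbb R^n$ this is Shen's Harnack-chain argument along the segment from $u$ to $v$ using the fundamental solution of $-\Delta+V$; transplanting it to $\mathbb H^n$ requires the sub-elliptic Fefferman--Phong inequality and the Green's function estimates in \cite{lin}. Together these ensure that each ball of radius $\rho(u)$ traversed by the sub-Riemannian geodesic carries a potential mass of order $1$, which yields the multiplicative improvement by concatenation.
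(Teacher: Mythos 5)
First, a point of reference: the paper offers no proof of this lemma --- it is quoted verbatim from Lin and Liu \cite[Lemma 7]{lin} --- so I am measuring your argument against the standard proof in that reference and its Euclidean antecedents (Shen, Kurata, Dziuba\'nski--Zienkiewicz). Several of your components are sound: deducing the symmetric bound from the one-sided bounds via $\mathcal P_s(u,v)=\mathcal P_s(v,u)$ and $(1+a)(1+b)\geq 1+a+b$ works, and bootstrapping a small decay exponent up to an arbitrary $N$ through the semigroup property plus Lemma \ref{N0} is indeed how the literature proceeds. One caveat there: the comparison factor $[1+|u^{-1}w_j|/\rho(u)]^{N_0/(N_0+1)}$ is \emph{not} absorbed wholesale by enlarging the Gaussian constant, because $\rho(u)$ may be much smaller than $\sqrt{s}$; only the piece $[1+|u^{-1}w_j|/\sqrt{s}]^{N_0/(N_0+1)}$ is eaten by the Gaussian, and the leftover $[1+\sqrt{s}/\rho(u)]^{N_0/(N_0+1)}$ erodes part of the gain at each intermediate point. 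The iteration still reaches every exponent $N$ because $N_0/(N_0+1)<1$, but this loss must be tracked, not waved away.

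The genuine gap is your base case. In the Duhamel identity $\mathcal P_s=p^0_s-\int_0^s\!\int p^0_{s-r}(u,w)V(w)\mathcal P_r(w,v)\,dw\,dr$, an upper bound on $\mathcal P_s$ requires a \emph{lower} bound on the correction term containing the unknown kernel $\mathcal P_r$. The domination $\mathcal P_r\leq p^0_r$ gives an \emph{upper} bound on that term, so your lower bound for the free quantity $\int p^0 V p^0$ transfers nothing --- the inequality points the wrong way. Worse, the estimate you claim cannot hold with $\mathcal P$ in the integrand: integrating the correction term in $v$ gives exactly $1-\int\mathcal P_s(u,v)\,dv\leq 1$, whereas your right-hand side integrates to roughly $\sqrt{s}/\rho(u)\gg 1$ when $\sqrt{s}\gg\rho(u)$; were it true it would force $\mathcal P_s<0$. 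No single application of Duhamel can produce a multiplicative factor $\rho(u)/\sqrt{s}$, because the semigroup is a contraction and can only lose a bounded fraction of mass per natural time scale. The correct mechanism, which your proposal lacks, is an iteration in time: for $\sqrt{s}\approx\rho(u)$ the Fefferman--Phong input yields a fixed loss $\int\mathcal P_s(u,v)\,dv\leq 1-\theta_0$ with $\theta_0\in(0,1)$; repeating this over the $\log_2(\sqrt{s}/\rho(u))$ dyadic time-doublings (using Lemma \ref{N0} to control the variation of $\rho$ along the way) accumulates $(1-\theta_0)^j\approx[\sqrt{s}/\rho(u)]^{-\epsilon}$ for some small $\epsilon>0$, and only then does one bootstrap to exponent $N$. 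As written, the step you substitute for this is logically reversed, so the proof does not go through.
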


\begin{rem}
$(i)$ This estimate of $\mathcal P_s(u,v)$ is much better than \eqref{heat}, which was given by Lin and Liu in \cite[Lemma 7]{lin}.

$(ii)$ For the Schr\"{o}dinger operators in a more general setting (such as nilpotent Lie group), see, for example, \cite{jiang,li}.
\end{rem}

\subsection{Littlewood-Paley function and Lusin area integral}
The Littlewood-Paley functions play an important role in classical harmonic analysis, for example in the study of non-tangential convergence of Fatou type and boundedness of Riesz transforms and multipliers (see \cite{stein}). Assume that $\big\{e^{-s\mathcal L}:s>0\big\}$ is the semigroup generated by $\mathcal L$. The Littlewood-Paley function associated with the Schr\"{o}dinger operator $\mathcal L$ on the Heisenberg group is defined by (see \cite{lin})
\begin{equation*}
\begin{split}
\mathfrak{g}_{\mathcal L}(f)(u)&:=\bigg(\int_0^{\infty}\bigg|s\frac{d}{ds}e^{-s\mathcal L}f(u)\bigg|^2\frac{ds}{s}\bigg)^{1/2}\\
&=\bigg(\int_0^{\infty}\big|(s\mathcal L)e^{-s\mathcal L}f(u)\big|^2\frac{ds}{s}\bigg)^{1/2}.
\end{split}
\end{equation*}
We also consider the Lusin area integral associated with the Schr\"{o}dinger operator $\mathcal L$ on $\mathbb H^n$, which is defined by (see also \cite{lin})
\begin{equation*}
\begin{split}
\mathcal{S}_{\mathcal L}(f)(u)&:=\bigg(\iint_{\Gamma(u)}\bigg|s\frac{d}{ds}e^{-s\mathcal L}f(v)\bigg|^2\frac{dvds}{s^{Q/2+1}}\bigg)^{1/2}\\
&=\bigg(\iint_{\Gamma(u)}\big|(s\mathcal L)e^{-s\mathcal L}f(v)\big|^2\frac{dvds}{s^{Q/2+1}}\bigg)^{1/2},
\end{split}
\end{equation*}
where
\begin{equation*}
\Gamma(u):=\big\{(v,s)\in\mathbb H^n\times(0,\infty):|u^{-1}v|<\sqrt{s\,}\big\}.
\end{equation*}
Recall that in the setting of $\mathbb R^n$, these two integral operators were investigated by many authors (see \cite{bong1}, \cite{dziu}, \cite{hofman} and \cite{pan}). In this article we shall be interested in the behavior of the Littlewood-Paley function $\mathfrak{g}_{\mathcal L}$ and the Lusin area integral $\mathcal{S}_{\mathcal L}$ related to Schr\"odinger operator on $\mathbb H^n$.

For $1\leq p<\infty$, the Lebesgue space $L^p(\mathbb H^n)$ is defined to be the set of all measurable functions $f$ on $\mathbb H^n$ such that
\begin{equation*}
\big\|f\big\|_{L^p(\mathbb H^n)}:=\bigg(\int_{\mathbb H^n}|f(u)|^p\,du\bigg)^{1/p}<\infty.
\end{equation*}
The weak Lebesgue space $WL^1(\mathbb H^n)$ consists of all measurable functions $f$ defined on $\mathbb H^n$ such that
\begin{equation*}
\big\|f\big\|_{WL^1(\mathbb H^n)}:=
\sup_{\lambda>0}\lambda\cdot\big|\big\{u\in\mathbb H^n:|f(u)|>\lambda\big\}\big|<\infty.
\end{equation*}

Recently, Lin and Liu \cite{lin} established strong-type and weak-type estimates of the operators $\mathfrak{g}_{\mathcal L}$ and $\mathcal{S}_{\mathcal L}$ on the Lebesgue spaces. Their main results can be formulated as follows.
\begin{thm}[\cite{lin}]\label{strongg}
Let $1\leq p<\infty$. Then the following statements are valid$:$
\begin{enumerate}
  \item if $p>1$, then the operator $\mathfrak{g}_{\mathcal L}$ is bounded from $L^p(\mathbb H^n)$ to $L^p(\mathbb H^n);$
  \item if $p=1$, then the operator $\mathfrak{g}_{\mathcal L}$ is bounded from $L^1(\mathbb H^n)$ to $WL^1(\mathbb H^n)$.
\end{enumerate}
\end{thm}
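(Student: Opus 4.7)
The plan is to treat $\mathfrak{g}_{\mathcal L}$ as a vector-valued Calder\'on--Zygmund operator on the space of homogeneous type $\mathbb H^n$, with values in the Hilbert space $\mathcal H := L^2((0,\infty),\,ds/s)$. Writing $Q_s := s\mathcal L\,e^{-s\mathcal L}$ and $\mathcal T f(u) := \{Q_s f(u)\}_{s>0}$, one has $\mathfrak{g}_{\mathcal L}(f)(u) = \|\mathcal T f(u)\|_{\mathcal H}$, and $\mathcal T$ admits the $\mathcal H$-valued integral kernel $\mathcal K(u,v) = \{-s\,\partial_s \mathcal P_s(u,v)\}_{s>0}$. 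The argument will proceed in three stages: (i) prove $L^2$-boundedness of $\mathfrak{g}_{\mathcal L}$ by spectral calculus; (ii) verify size and H\"ormander-type regularity estimates for the $\mathcal H$-valued kernel $\mathcal K$; (iii) invoke the vector-valued Calder\'on--Zygmund theorem on $\mathbb H^n$ to deduce weak-type $(1,1)$ and strong-type $(p,p)$ for every $1<p<\infty$ in a single step.

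Stage (i) is immediate. Since $\mathcal L$ is nonnegative and self-adjoint on $L^2(\mathbb H^n)$, Fubini together with the spectral resolution $\mathcal L = \int_0^\infty \lambda\,dE_\lambda$ gives
\begin{equation*}
\|\mathfrak{g}_{\mathcal L}(f)\|_{L^2}^2 \,=\, \int_0^\infty \int_0^\infty (s\lambda)^2 e^{-2s\lambda}\,\frac{ds}{s}\,d\langle E_\lambda f, f\rangle \,=\, \tfrac{1}{4}\|f\|_{L^2}^2,
\end{equation*}
because $\int_0^\infty (s\lambda)^2 e^{-2s\lambda}\,ds/s = 1/4$ for every $\lambda \geq 0$. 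For stage (ii) we first upgrade the Gaussian bound of Lemma \ref{ker1} to a matching pointwise estimate for $s\,\partial_s \mathcal P_s(u,v)$ by applying the Cauchy integral formula in the sector of holomorphy of the heat semigroup (paying only a change of Gaussian constant), and we obtain corresponding bounds for the horizontal derivatives $X_j\mathcal P_s,\,Y_j\mathcal P_s$ with an additional factor of $s^{-1/2}$, by subellipticity. From these pointwise estimates one then extracts the size bound $\|\mathcal K(u,v)\|_{\mathcal H} \leq C\,|v^{-1}u|^{-Q}$ and the H\"ormander-type smoothness
\begin{equation*}
\int_{|u_0^{-1}u|>2|u_0^{-1}v|}\|\mathcal K(u,v)-\mathcal K(u,u_0)\|_{\mathcal H}\,du \leq C.
\end{equation*}

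The main obstacle lies in this smoothness estimate: because the norm is taken in $\mathcal H$, one cannot simply integrate the pointwise difference. The standard device is to apply the mean-value theorem slicewise in $s$, obtaining
\begin{equation*}
|\mathcal K(u,v)(s)-\mathcal K(u,u_0)(s)| \leq C\,|v^{-1}u_0|\cdot s^{-Q/2-1/2}\exp\!\bigl(-|w^{-1}u|^2/(As)\bigr)
\end{equation*}
for a point $w$ on a horizontal path joining $v$ to $u_0$, and then to integrate in $s$ against $ds/s$ to gain a factor $|u^{-1}u_0|^{-Q-1}$; a dyadic decomposition of the spatial exterior $\{|u_0^{-1}u|>2|u_0^{-1}v|\}$ produces a geometric series that sums to a constant. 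Once stages (i) and (ii) are established, the vector-valued Calder\'on--Zygmund theorem on the homogeneous space $\mathbb H^n$ immediately delivers both assertions of the theorem; alternatively, for $1<p<2$ one may interpolate between the weak $(1,1)$ bound and the strong $(2,2)$ bound by the Marcinkiewicz theorem.
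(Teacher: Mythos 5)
This theorem is not proved in the paper at all: it is quoted verbatim from Lin and Liu \cite{lin}, so there is no internal proof to compare against. Judged on its own terms, your architecture (scalar-to-$\mathcal H$-valued Calder\'on--Zygmund theory on the space of homogeneous type $\mathbb H^n$) is the standard and correct route, and it is essentially the one followed in the literature. Stage (i) is fine: the spectral identity gives $\|\mathfrak g_{\mathcal L}f\|_2^2=\tfrac14\|f\|_2^2$ once you note that $E(\{0\})=0$, which holds here because $\mathcal L f=0$ forces $f$ to be constant and hence $f=0$ in $L^2$. The size bound $\|\mathcal K(u,v)\|_{\mathcal H}\lesssim|v^{-1}u|^{-Q}$ follows from the Gaussian bound on $\mathcal Q_s$ (Lemma \ref{kernel}) by the substitution $t=|v^{-1}u|^2/s$, and your dyadic summation for the H\"ormander integral is arithmetically correct given the claimed pointwise difference bound. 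The symmetry $\mathcal Q_s(u,v)=\mathcal Q_s(v,u)$ takes care of the adjoint/duality step needed for $2<p<\infty$.

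The one step that is not justified as written is the smoothness estimate. You invoke ``bounds for the horizontal derivatives $X_j\mathcal P_s, Y_j\mathcal P_s$ with an additional factor of $s^{-1/2}$, by subellipticity'' and then apply the mean value theorem along a horizontal path to get a full Lipschitz factor $|v^{-1}u_0|$. For the free heat kernel this is true, but for the Schr\"odinger heat kernel with $V\in RH_q$ and $Q/2\le q<Q$ a pointwise Gaussian gradient bound is in general \emph{not} available (this is precisely the regime, going back to Shen, where only averaged or fractional regularity of the fundamental solution survives). The correct substitute is the H\"older continuity of the perturbed kernel: from the Duhamel formula $\mathcal P_s=h_s-\int_0^s\int h_{s-\tau}(\cdot,w)V(w)\mathcal P_\tau(w,\cdot)\,dw\,d\tau$ one gets
\begin{equation*}
\big|\mathcal Q_s(u,v)-\mathcal Q_s(u,u_0)\big|\le C\Big(\frac{|v^{-1}u_0|}{\sqrt{s}}\Big)^{\delta}s^{-Q/2}\exp\Big(-\frac{|u_0^{-1}u|^2}{As}\Big)
\end{equation*}
for some $\delta=\delta(q)\in(0,1]$ and $|u_0^{-1}u|\ge 2|u_0^{-1}v|$ (passing the regularity through $\mathcal Q_s=(\tfrac{s}{2}\mathcal L e^{-\frac{s}{2}\mathcal L})e^{-\frac{s}{2}\mathcal L}$ so that only the semigroup factor needs to be differenced). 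This weaker estimate still yields $\|\mathcal K(u,v)-\mathcal K(u,u_0)\|_{\mathcal H}\lesssim |v^{-1}u_0|^{\delta}|u_0^{-1}u|^{-Q-\delta}$, whose dyadic sum is $\sum_k 2^{-k\delta}<\infty$, so the H\"ormander condition and hence both conclusions of the theorem survive. In short: the strategy is right, but replace the mean-value/gradient step by the H\"older estimate, which is what the hypothesis $q\ge Q/2$ actually supports.
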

\begin{thm}[\cite{lin}]\label{strongs}
Let $1\leq p<\infty$. Then the following statements are valid$:$
\begin{enumerate}
  \item if $p>1$, then the operator $\mathcal{S}_{\mathcal L}$ is bounded from $L^p(\mathbb H^n)$ to $L^p(\mathbb H^n);$
  \item if $p=1$, then the operator $\mathcal{S}_{\mathcal L}$ is bounded from $L^1(\mathbb H^n)$ to $WL^1(\mathbb H^n)$.
\end{enumerate}
\end{thm}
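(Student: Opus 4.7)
The plan is to reduce to vector-valued Calderón–Zygmund theory on the space of homogeneous type $(\mathbb H^n,d,du)$, relying on the refined heat kernel estimate of Lemma \ref{ker1} to verify the necessary kernel conditions. For the $L^2$ bound I would use a Fubini argument: since $\{u\in\mathbb H^n:|u^{-1}v|<\sqrt{s\,}\}=B(v,\sqrt{s\,})$ has measure $|B(0,1)|\,s^{Q/2}$ by \eqref{homonorm}, swapping the order of integration gives
\begin{equation*}
\big\|\mathcal{S}_{\mathcal L}f\big\|_{L^2(\mathbb H^n)}^2=|B(0,1)|\int_{\mathbb H^n}\int_0^{\infty}\big|(s\mathcal L)e^{-s\mathcal L}f(v)\big|^2\frac{ds}{s}\,dv=|B(0,1)|\cdot\big\|\mathfrak{g}_{\mathcal L}f\big\|_{L^2(\mathbb H^n)}^2,
\end{equation*}
which is $\lesssim\|f\|_{L^2}^2$ by Theorem \ref{strongg}.

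To handle the remaining $p$'s and the endpoint, I would view $\mathcal{S}_{\mathcal L}$ as the norm of a Hilbert-valued operator. Put $\mathcal H:=L^2\big(\mathbb H^n\times(0,\infty),dv\,ds/s^{Q/2+1}\big)$ and define
\begin{equation*}
T_{\mathcal L}f(u)(v,s):=(s\mathcal L)e^{-s\mathcal L}f(v)\cdot\chi_{\Gamma(u)}(v,s),
\end{equation*}
so that $\mathcal{S}_{\mathcal L}f(u)=\|T_{\mathcal L}f(u)\|_{\mathcal H}$. For $u$ outside the support of $f$, one has $T_{\mathcal L}f(u)=\int_{\mathbb H^n}K(u,w)f(w)\,dw$ with kernel $K(u,w)(v,s):=s\,\partial_s\mathcal P_s(v,w)\cdot\chi_{\Gamma(u)}(v,s)$.

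The crux of the argument is to prove the size bound $\|K(u,w)\|_{\mathcal H}\leq C|u^{-1}w|^{-Q}$ together with the Hörmander-type smoothness estimate
\begin{equation*}
\int_{|u^{-1}w|>2|u^{-1}u'|}\big\|K(u,w)-K(u',w)\big\|_{\mathcal H}\,dw\leq C.
\end{equation*}
The size bound follows from Lemma \ref{ker1} applied to $\partial_s\mathcal P_s(v,w)=-(\mathcal L_v\mathcal P_s)(v,w)$ combined with the semigroup factorization $e^{-s\mathcal L}=e^{-(s/2)\mathcal L}\circ e^{-(s/2)\mathcal L}$, which transfers $\mathcal L$ onto a Gaussian factor and yields $|s\,\partial_s\mathcal P_s(v,w)|\lesssim s^{-Q/2}\exp(-|w^{-1}v|^2/(As))[1+\sqrt{s\,}/\rho(v)]^{-N}$ for arbitrarily large $N$. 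The smoothness estimate is the main obstacle: because the truncation set $\Gamma(u)$ depends on $u$, one has to split the integral over $v$ into a contribution from the symmetric difference $\Gamma(u)\triangle\Gamma(u')$ and a contribution from $\Gamma(u)\cap\Gamma(u')$; the latter calls for a left-invariant Lipschitz bound for $\mathcal P_s$ in its first argument, obtained via subelliptic gradient estimates for the heat kernel on $\mathbb H^n$.

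Once these kernel bounds are established, the vector-valued Calderón–Zygmund decomposition on $(\mathbb H^n,d,du)$ delivers the weak-type $(1,1)$ inequality from the $L^2$ boundedness, and Marcinkiewicz interpolation yields strong type $(p,p)$ for $1<p\leq 2$. The range $2<p<\infty$ is then obtained by linearization: writing $\mathcal{S}_{\mathcal L}f=\|T_{\mathcal L}f\|_{\mathcal H}$ and dualizing against $L^{p'}(\mathbb H^n;\mathcal H)$ reduces matters to the boundedness of $T_{\mathcal L}^*$, to which the same kernel analysis applies with the roles of $u$ and $w$ interchanged.
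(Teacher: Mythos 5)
First, a point of comparison: the paper does not prove this statement at all --- Theorem \ref{strongs} is quoted verbatim from Lin and Liu \cite{lin}, so there is no in-paper argument to measure yours against. Your overall strategy (an $L^2$ identity by Fubini reducing $\mathcal S_{\mathcal L}$ to $\mathfrak g_{\mathcal L}$, then vector-valued Calder\'on--Zygmund theory for the $\mathcal H$-valued operator $T_{\mathcal L}$) is the standard and essentially correct route, and your $L^2$ computation using $|\{u:|u^{-1}v|<\sqrt{s\,}\}|=|B(0,1)|s^{Q/2}$ is right. Note also that the pointwise bound you propose to derive for $s\,\partial_s\mathcal P_s(v,w)$ is exactly the paper's Lemma \ref{kernel} for $\mathcal Q_s$, so you may simply cite it; your sketch via the factorization $e^{-s\mathcal L}=e^{-(s/2)\mathcal L}\circ e^{-(s/2)\mathcal L}$ is circular as written, since it presupposes a bound on the kernel of $\mathcal L e^{-(s/2)\mathcal L}$.

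There are, however, two concrete problems in the Calder\'on--Zygmund step. First, you have written the H\"ormander condition in the wrong variable. For the weak $(1,1)$ bound one decomposes $f$ and uses the mean-zero property of the bad parts $b_j$ supported in $B(w_j,r_j)$, which requires
\begin{equation*}
\int_{|u^{-1}w_j|>2r_j}\big\|K(u,w)-K(u,w_j)\big\|_{\mathcal H}\,du\leq C,\qquad w\in B(w_j,r_j),
\end{equation*}
i.e.\ regularity of $K(u,\cdot)$ in the \emph{second} variable integrated in $u$. Since $K(u,w)(v,s)=s\,\partial_s\mathcal P_s(v,w)\chi_{\Gamma(u)}(v,s)$, this is precisely where a H\"older (or gradient) estimate for $\mathcal P_s(v,\cdot)$ in its second spatial argument is unavoidable --- the size bound $\|K(u,w)\|_{\mathcal H}\lesssim|u^{-1}w|^{-Q}$ alone is not integrable at infinity --- and this nontrivial heat-kernel input is neither stated nor proved in your sketch. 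Second, your analysis of the condition you did write down, $\int\|K(u,w)-K(u',w)\|_{\mathcal H}\,dw$, mislocates the difficulty: on $\Gamma(u)\cap\Gamma(u')$ the difference vanishes identically, because the kernel value $s\,\partial_s\mathcal P_s(v,w)$ does not depend on $u$ at all; the entire difference is supported on the symmetric difference $\Gamma(u)\triangle\Gamma(u')$, where $v$ ranges over an annulus of width $\approx|u^{-1}u'|$ at radius $\sqrt{s\,}$, and the required bound follows from the size estimate plus this thin-shell geometry. So no ``left-invariant Lipschitz bound for $\mathcal P_s$ in its first argument'' is needed there, while the genuinely needed smoothness (in $w$) is missing. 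With these two points corrected --- supply the H\"older continuity of $\mathcal P_s$ in $w$ (available for $V\in RH_q$, $q\geq Q/2$, and implicit in \cite{lin}) and use it in the correct variable --- your argument closes.
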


\begin{rem}
$(i)$ It was also proved by Lin and Liu that these two operators are bounded on $\mathrm{BMO}_{\mathcal L}(\mathbb H^n)$, and bounded from $H^1_{\mathcal L}(\mathbb H^n)$ to $L^1(\mathbb H^n)$.

$(ii)$ In \cite{zhao}, Zhao introduced and studied the Littlewood-Paley and Lusin functions associated to the sublaplacian operator on (connected) nilpotent Lie groups. The $L^p$ $(1<p<\infty)$ boundedness of Littlewood-Paley and Lusin functions are obtained in this general setting.
\end{rem}

The organization of this paper is as follows. In Section \ref{def}, we will give the definitions of Morrey space and weak Morrey space associated with Schr\"odinger operator on $\mathbb H^n$.In Section \ref{sec3}, we establish the boundedness properties of the Littlewood-Paley function $\mathfrak{g}_{\mathcal L}$ in the context of Morrey spaces. Section \ref{sec4} is devoted to proving the boundedness of the Lusin area integral $\mathcal{S}_{\mathcal L}$. The generalized Morrey estimates for the operators $\mathfrak{g}_{\mathcal L}$ and $\mathcal{S}_{\mathcal L}$ are obtained in Section \ref{sec5}.
All results hold for the operators $\mathfrak{g}_{\sqrt{\mathcal L}}$ and $\mathcal{S}_{\sqrt{\mathcal L}}$ with respect to the Poisson semigroup as well.

Throughout this paper, $C$ always denotes a positive constant which is independent of the main parameters involved but whose value may be different from line to line, and a subscript is added when we wish to make clear its dependence on the parameter in the subscript. The symbol $\mathbb{A}\lesssim \mathbb{B}$ means that $\mathbb{A}\leq C\mathbb{B}$ with some positive constant $C$. If $\mathbb{A}\lesssim \mathbb{B}$ and $\mathbb{B}\lesssim \mathbb{A}$, then we write $\mathbb{A}\approx \mathbb{B}$ to denote the equivalence of $\mathbb{A}$ and $\mathbb{B}$. For any $p\in[1,\infty)$, the notation $p'$ denotes its conjugate number, namely, $1/p+1/{p'}=1$ and $1'=\infty$.

\section{Definitions of Morrey and weak Morrey spaces}\label{def}
The classical Morrey space $M^{p,\lambda}(\mathbb R^n)$ was originally introduced by Morrey in \cite{morrey} to study the local behavior of solutions to second order elliptic partial differential equations. Since then, this space was systematically developed by many authors. Nowadays this space has been studied intensively and widely used in analysis, geometry, mathematical physics and other related fields. For the properties and applications of classical Morrey space, we refer the readers to \cite{adams1,adams2,adams3,fazio1,fazio2,taylor} and the references therein. We denote by $M^{p,\lambda}(\mathbb R^n)$ the Morrey space, which consists of all $p$-locally integrable functions $f$ on $\mathbb R^n$ such that
\begin{equation*}
\begin{split}
\|f\|_{M^{p,\lambda}(\mathbb R^n)}:=&\sup_{x\in\mathbb R^n,r>0}r^{-\lambda/p}\|f\|_{L^p(B(x,r))}\\
=&\sup_{x\in\mathbb R^n,r>0}r^{-\lambda/p}\bigg(\int_{B(x,r)}|f(y)|^p\,dy\bigg)^{1/p}<\infty,
\end{split}
\end{equation*}
where $1\leq p<\infty$ and $0\leq\lambda\leq n$. Note that $M^{p,0}(\mathbb R^n)=L^p(\mathbb R^n)$ and $M^{p,n}(\mathbb R^n)=L^\infty(\mathbb R^n)$ by the Lebesgue differentiation theorem. If $\lambda<0$ or $\lambda>n$, then $M^{p,\lambda}(\mathbb R^n)=\Theta$, where $\Theta$ is the set of all functions equivalent to 0 on $\mathbb R^n$. We also denote by $WM^{1,\lambda}(\mathbb R^n)$ the weak Morrey space, which consists of all measurable functions $f$ on $\mathbb R^n$ such that
\begin{equation*}
\begin{split}
\|f\|_{WM^{1,\lambda}(\mathbb R^n)}:=&\sup_{x\in\mathbb R^n,r>0}r^{-\lambda}\|f\|_{WL^1(B(x,r))}\\
=&\sup_{x\in\mathbb R^n,r>0}r^{-\lambda}\sup_{\sigma>0}\sigma\big|\big\{y\in B(x,r):|f(y)|>\sigma\big\}\big|<\infty.
\end{split}
\end{equation*}

In this section, we introduce some kinds of Morrey spaces associated with the Schr\"odinger operator $\mathcal L$ on $\mathbb H^n$.
\begin{defin}
Let $\rho$ be the auxiliary function determined by $V\in RH_q$ with $q\geq Q/2$. Let $1\leq p<\infty$ and $0\leq\kappa<1$. For given $0<\theta<\infty$, the Morrey space $L^{p,\kappa}_{\rho,\theta}(\mathbb H^n)$ is defined to be the set of all $p$-locally integrable functions $f$ on $\mathbb H^n$ such that
\begin{equation}\label{morrey1}
\bigg(\frac{1}{|B(u_0,r)|^{\kappa}}\int_{B(u_0,r)}|f(u)|^p\,du\bigg)^{1/p}
\leq C\cdot\left[1+\frac{r}{\rho(u_0)}\right]^{\theta}
\end{equation}
holds for every ball $B(u_0,r)$ in $\mathbb H^n$, $u_0$ and $r$ denote the center and radius of $B(u_0,r)$, respectively. A norm for $f\in L^{p,\kappa}_{\rho,\theta}(\mathbb H^n)$, denoted by $\|f\|_{L^{p,\kappa}_{\rho,\theta}(\mathbb H^n)}$, is given by the infimum of the constants appearing in \eqref{morrey1}, or equivalently,
\begin{equation*}
\big\|f\big\|_{L^{p,\kappa}_{\rho,\theta}(\mathbb H^n)}:=\sup_{B(u_0,r)}\left[1+\frac{r}{\rho(u_0)}\right]^{-\theta}
\bigg(\frac{1}{|B(u_0,r)|^{\kappa}}\int_{B(u_0,r)}|f(u)|^p\,du\bigg)^{1/p}
<\infty,
\end{equation*}
where the supremum is taken over all balls $B(u_0,r)$ in $\mathbb H^n$. Define
\begin{equation*}
L^{p,\kappa}_{\rho,\infty}(\mathbb H^n):=\bigcup_{0<\theta<\infty}L^{p,\kappa}_{\rho,\theta}(\mathbb H^n).
\end{equation*}
\end{defin}

\begin{defin}
Let $\rho$ be the auxiliary function determined by $V\in RH_q$ with $q\geq Q/2$. Let $p=1$ and $0\leq\kappa<1$. For given $0<\theta<\infty$, the weak Morrey space $WL^{1,\kappa}_{\rho,\theta}(\mathbb H^n)$ is defined to be the set of all measurable functions $f$ on $\mathbb H^n$ such that
\begin{equation*}
\frac{1}{|B(u_0,r)|^{\kappa}}\sup_{\lambda>0}\lambda\cdot\big|\big\{u\in B(u_0,r):|f(u)|>\lambda\big\}\big|
\leq C\cdot\left[1+\frac{r}{\rho(u_0)}\right]^{\theta}
\end{equation*}
holds for every ball $B(u_0,r)$ in $\mathbb H^n$, or equivalently,
\begin{equation*}
\big\|f\big\|_{WL^{1,\kappa}_{\rho,\theta}(\mathbb H^n)}:=\sup_{B(u_0,r)}\left[1+\frac{r}{\rho(u_0)}\right]^{-\theta}\frac{1}{|B(u_0,r)|^{\kappa}}
\sup_{\lambda>0}\lambda\cdot\big|\big\{u\in B(u_0,r):|f(u)|>\lambda\big\}\big|<\infty.
\end{equation*}
Correspondingly, we define
\begin{equation*}
WL^{1,\kappa}_{\rho,\infty}(\mathbb H^n):=\bigcup_{0<\theta<\infty}WL^{1,\kappa}_{\rho,\theta}(\mathbb H^n).
\end{equation*}
\end{defin}
\begin{rem}
$(i)$ Obviously, if we take $\theta=0$ or $V\equiv0$, then this Morrey space $L^{p,\kappa}_{\rho,\theta}(\mathbb H^n)$ (or weak Morrey space $WL^{1,\kappa}_{\rho,\theta}(\mathbb H^n)$) is just the Morrey space $L^{p,\kappa}(\mathbb H^n)$ (or weak Morrey space $WL^{1,\kappa}(\mathbb H^n)$), which was defined and studied by Guliyev et al. \cite{guliyev}.

$(ii)$ According to the above definitions, one has
\begin{align}
L^{p,\kappa}(\mathbb H^n)\subset L^{p,\kappa}_{\rho,\theta_1}(\mathbb H^n)\subset L^{p,\kappa}_{\rho,\theta_2}(\mathbb H^n);& \label{22}\\
WL^{1,\kappa}(\mathbb H^n)\subset WL^{1,\kappa}_{\rho,\theta_1}(\mathbb H^n)\subset WL^{1,\kappa}_{\rho,\theta_2}(\mathbb H^n),& \label{23}
\end{align}
whenever $0<\theta_1<\theta_2<\infty$. Hence,
\begin{equation*}
L^{p,\kappa}(\mathbb H^n)\subset L^{p,\kappa}_{\rho,\infty}(\mathbb H^n)\quad \mathrm{and} \quad WL^{1,\kappa}(\mathbb H^n)\subset WL^{1,\kappa}_{\rho,\infty}(\mathbb H^n),
\end{equation*}
for all $(p,\kappa)\in[1,\infty)\times[0,1)$.

$(iii)$ We can define a norm on the space $L^{p,\kappa}_{\rho,\infty}(\mathbb H^n)$, which makes it into a Banach space. In view of \eqref{22}, for any given $f\in L^{p,\kappa}_{\rho,\infty}(\mathbb H^n)$, let
\begin{equation*}
\theta^*:=\inf\big\{\theta>0:f\in L^{p,\kappa}_{\rho,\theta}(\mathbb H^n)\big\}.
\end{equation*}
Now define the functional $\|\cdot\|_{\star}$ by
\begin{equation}\label{newnorm}
\|f\|_{\star}=\big\|f\big\|_{L^{p,\kappa}_{\rho,\infty}(\mathbb H^n)}:=\big\|f\big\|_{L^{p,\kappa}_{\rho,\theta^*}(\mathbb H^n)}.
\end{equation}
It is easy to check that this functional $\|\cdot\|_{\star}$ satisfies the axioms of a norm; i.e., that for $f,g\in L^{p,\kappa}_{\rho,\infty}(\mathbb H^n)$ and $\lambda\in\mathbb R$,
\begin{itemize}
  \item it is positive definite: $\|f\|_{\star}\geq0$, and $\|f\|_{\star}=0\Leftrightarrow f=0;$
  \item it is multiplicative: $\|\lambda f\|_{\star}=|\lambda|\|f\|_{\star};$
  \item it satisfies the triangle inequality: $\|f+g\|_{\star}\leq\|f\|_{\star}+\|g\|_{\star}$.
\end{itemize}

$(iv)$ In view of \eqref{23}, for any given $f\in WL^{1,\kappa}_{\rho,\infty}(\mathbb H^n)$, let
\begin{equation*}
\theta^{**}:=\inf\big\{\theta>0:f\in WL^{1,\kappa}_{\rho,\theta}(\mathbb H^n)\big\}.
\end{equation*}
Similarly, we define the functional $\|\cdot\|_{\star\star}$ by
\begin{equation*}
\|f\|_{\star\star}=\big\|f\big\|_{WL^{1,\kappa}_{\rho,\infty}(\mathbb H^n)}:=\big\|f\big\|_{WL^{1,\kappa}_{\rho,\theta^{**}}(\mathbb H^n)}.
\end{equation*}
By definition, we can easily show that this functional $\|\cdot\|_{\star\star}$ satisfies the axioms of a (quasi-)norm, and $WL^{1,\kappa}_{\rho,\infty}(\mathbb H^n)$ is a (quasi-)normed linear space.
\end{rem}

Since Morrey space $L^{p,\kappa}_{\rho,\theta}(\mathbb H^n)$ (or weak Morrey space $WL^{1,\kappa}_{\rho,\theta}(\mathbb H^n)$) could be viewed as an extension of Lebesgue (or weak Lebesgue) space on $\mathbb H^n$ (when $\kappa=\theta=0$, or $\kappa=0$,$V\equiv0$), it is accordingly natural to investigate the boundedness properties of the Littlewood-Paley functions in the framework of
Morrey spaces. In this article we will extend Theorems \ref{strongg} and \ref{strongs} to the Morrey spaces on $\mathbb H^n$.

\section{Boundedness of the Littlewood-Paley function $\mathfrak{g}_{\mathcal L}$}\label{sec3}
In this section, we will establish the boundedness properties of the Littlewood-Paley function $\mathfrak{g}_{\mathcal L}$ acting on $L^{p,\kappa}_{\rho,\infty}(\mathbb H^n)$ for $(p,\kappa)\in[1,\infty)\times(0,1)$. Recall the Littlewood-Paley function $\mathfrak{g}_{\mathcal L}$ defined in the introduction by
\begin{equation*}
\mathfrak{g}_{\mathcal L}(f)(u)=\bigg(\int_0^{\infty}\big|(s\mathcal L)e^{-s\mathcal L}f(u)\big|^2\frac{ds}{s}\bigg)^{1/2},\quad u\in\mathbb H^n,
\end{equation*}
where $\big\{e^{-s\mathcal L}:s>0\big\}$ is the semigroup generated by $\mathcal L$. Let $\mathcal Q_{s}(u,v)$ denote the kernel of $(s\mathcal L)e^{-s\mathcal L},s>0$. Then we have
\begin{equation}\label{kauv}
\mathfrak{g}_{\mathcal L}(f)(u)=\bigg(\int_0^{\infty}\bigg|\int_{\mathbb H^n}\mathcal Q_{s}(u,v)f(v)\,dv\bigg|^2\frac{ds}{s}\bigg)^{1/2}.
\end{equation}

We now present our main results as follows.
\begin{thm}\label{mainthm:1}
Let $\rho$ be as in \eqref{rho}. Let $1<p<\infty$ and $0<\kappa<1$. If $V\in RH_q$ with $q\geq Q/2$, then the Littlewood-Paley operator $\mathfrak{g}_{\mathcal L}$ is bounded on $L^{p,\kappa}_{\rho,\infty}(\mathbb H^n)$.
\end{thm}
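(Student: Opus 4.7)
The plan is to adapt the classical Morrey-space localization argument to the Schrödinger setting on $\mathbb H^n$, using the extra $\rho$-decay of the heat semigroup kernel. Given $f \in L^{p,\kappa}_{\rho,\infty}(\mathbb H^n)$, fix $\theta > 0$ with $f \in L^{p,\kappa}_{\rho,\theta}(\mathbb H^n)$; I aim to derive an estimate of the form $\|\mathfrak{g}_{\mathcal L}(f)\|_{L^{p,\kappa}_{\rho,\theta'}(\mathbb H^n)} \lesssim \|f\|_{L^{p,\kappa}_{\rho,\theta}(\mathbb H^n)}$ for some finite $\theta'$ depending on $\theta$, which by inclusion and \eqref{newnorm} delivers boundedness on $L^{p,\kappa}_{\rho,\infty}(\mathbb H^n)$. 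Fix any ball $B = B(u_0, r)$ and decompose $f = f_1 + f_2$ with $f_1 := f\chi_{2B}$ and $f_2 := f\chi_{(2B)^c}$.

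For the local piece, Theorem \ref{strongg} (applicable since $1<p<\infty$) gives $\|\mathfrak{g}_{\mathcal L}(f_1)\|_{L^p(B)} \leq C\|f\|_{L^p(2B)}$, and the Morrey bound on $2B$ combined with the doubling formula \eqref{homonorm} immediately yields $|B|^{-\kappa/p}\|\mathfrak{g}_{\mathcal L}(f_1)\|_{L^p(B)} \lesssim [1 + r/\rho(u_0)]^{\theta}\|f\|_{L^{p,\kappa}_{\rho,\theta}(\mathbb H^n)}$. For the far-field piece, the cornerstone is the pointwise kernel estimate
\[
\biggl(\int_0^{\infty}|\mathcal Q_s(u,v)|^2\,\frac{ds}{s}\biggr)^{1/2} \leq \frac{C_N}{|v^{-1}u|^{Q}}\biggl[1 + \frac{|v^{-1}u|}{\rho(u)}\biggr]^{-N},
\]
for any prescribed $N\in\mathbb N$. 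I would derive it from Lemma \ref{ker1} applied to the kernel $\mathcal Q_s = -s\,\partial_s\mathcal P_s$, whose time-derivative inherits the same Gaussian-and-$\rho$-decay as $\mathcal P_s$: splitting the $s$-integral at $s = |v^{-1}u|^2$ and using the polynomial substitute $e^{-x}\leq C_M x^{-M}$ on the small-$s$ regime together with the polynomial $\rho$-decay on the large-$s$ regime produces the displayed bound.

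Armed with this pointwise bound, Minkowski's inequality and the annular decomposition $(2B)^c = \bigcup_{k\geq 1}(2^{k+1}B \setminus 2^k B)$ (noting $|v^{-1}u|\approx 2^k r$ for $u\in B$ and $v$ in the $k$-th annulus), combined with Hölder's inequality, the volume formulas \eqref{sqrts}--\eqref{homonorm}, and the Morrey bound applied on $2^{k+1}B$, yield
\[
\mathfrak{g}_{\mathcal L}(f_2)(u) \lesssim \|f\|_{L^{p,\kappa}_{\rho,\theta}(\mathbb H^n)} \sum_{k=1}^{\infty} 2^{kQ(\kappa-1)/p}\,\frac{[1 + 2^{k+1}r/\rho(u_0)]^{\theta}}{[1 + 2^k r/\rho(u)]^{N}}.
\]
Using \eqref{com2} to replace $\rho(u)$ by $\rho(u_0)$ (at the cost of a harmless factor $[1 + r/\rho(u_0)]^{NN_0/(N_0+1)}$), together with the crude bound $[1 + 2^{k+1}x]^{\theta}\leq 2^{(k+1)\theta}[1+2^k x]^{\theta}$, reduces the sum to $\sum_{k}2^{k(Q(\kappa-1)/p+\theta)}[1 + 2^k r/\rho(u_0)]^{\theta - N}$, which converges absolutely once $N$ is chosen sufficiently large relative to $\theta$ (legitimate because $N$ is arbitrary in Lemma \ref{ker1}). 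This closes the far-field estimate and produces $|B|^{-\kappa/p}\|\mathfrak{g}_{\mathcal L}(f_2)\|_{L^p(B)} \lesssim [1 + r/\rho(u_0)]^{\theta'}\|f\|_{L^{p,\kappa}_{\rho,\theta}(\mathbb H^n)}$ with $\theta' = NN_0/(N_0+1)$.

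The main technical obstacle is the pointwise integral estimate for $\int_0^\infty|\mathcal Q_s|^2\,ds/s$ displayed above: one must track both the Gaussian decay in $|v^{-1}u|^2/s$ and the polynomial $\rho$-decay in $\sqrt{s}/\rho(u)$ simultaneously and show that they combine, after $s$-integration, into the single factor $|v^{-1}u|^{-Q}[1 + |v^{-1}u|/\rho(u)]^{-N}$. The remaining geometric-series bookkeeping is routine: the hypothesis $\kappa<1$ is exactly what makes $2^{kQ(\kappa-1)/p}$ summable, and the freedom in Lemma \ref{ker1} to enlarge $N$ absorbs the polynomial growth $2^{k\theta}$ produced by applying the Morrey norm at scale $2^{k+1}r$.
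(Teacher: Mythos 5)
Your argument is, in substance, the paper's argument: the same splitting $f=f\chi_{2B}+f\chi_{(2B)^{\complement}}$, the local part handled by the $L^p$-boundedness of $\mathfrak{g}_{\mathcal L}$ (Theorem \ref{strongg}) plus the Morrey condition on $2B$, and the far part by the pointwise bound $\mathfrak{g}_{\mathcal L}(f_2)(u)\lesssim\int_{(2B)^{\complement}}[1+|v^{-1}u|/\rho(u)]^{-N}|v^{-1}u|^{-Q}|f(v)|\,dv$ obtained by splitting the $s$-integral at $|v^{-1}u|^2$ and using $e^{-x}\le C_Mx^{-M}$, followed by annuli, H\"older, \eqref{com2}, and a large choice of $N$. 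One side remark: the estimate on $\mathcal Q_s$ you invoke is exactly Lemma \ref{kernel}, which the paper quotes from \cite{lin}; it is not an immediate consequence of Lemma \ref{ker1} by the time derivative simply ``inheriting'' the bounds (one needs the analyticity of the semigroup or the argument of \cite{lin}), but since the paper also takes this lemma as given, that is not a gap.

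The one step that does not survive as written is the final bookkeeping. The ``crude bound'' $[1+2^{k+1}x]^{\theta}\le 2^{(k+1)\theta}[1+2^kx]^{\theta}$ injects a spurious factor $2^{k\theta}$, and the resulting series $\sum_k 2^{k(Q(\kappa-1)/p+\theta)}\,[1+2^kr/\rho(u_0)]^{\theta-N}$ is \emph{not} uniformly bounded over all balls once $\theta>Q(1-\kappa)/p$, no matter how large $N$ is: for a ball with $r\ll\rho(u_0)$ the bracket is comparable to $1$ for every $k\le\log_2(\rho(u_0)/r)$, so the partial sum over those $k$ is comparable to $(\rho(u_0)/r)^{\theta-Q(1-\kappa)/p}$, which blows up as $r/\rho(u_0)\to0$, whereas the target bound $[1+r/\rho(u_0)]^{\vartheta}$ tends to $1$. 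The repair is what the paper does: use $1+2^{k+1}x\le 2(1+2^kx)$, so the constant is $2^{\theta}$, independent of $k$; then $[1+2^{k+1}r/\rho(u_0)]^{\theta-N}\le1$ for $N\ge\theta$, and all that remains is the geometric series $\sum_k 2^{-kQ(1-\kappa)/p}$, convergent precisely because $\kappa<1$. With that one correction your proof coincides with the paper's.
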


\begin{thm}\label{mainthm:2}
Let $\rho$ be as in \eqref{rho}. Let $p=1$ and $0<\kappa<1$. If $V\in RH_q$ with $q\geq Q/2$, then the Littlewood-Paley operator $\mathfrak{g}_{\mathcal L}$ is bounded from $L^{1,\kappa}_{\rho,\infty}(\mathbb H^n)$ to $WL^{1,\kappa}_{\rho,\infty}(\mathbb H^n)$.
\end{thm}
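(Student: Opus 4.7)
The plan is to carry out a Calder\'on--Zygmund style decomposition adapted to the Morrey scale. I fix an arbitrary ball $B=B(u_0,r)$ in $\mathbb H^n$ and split $f=f_1+f_2$ with $f_1:=f\chi_{2B}$ and $f_2:=f\chi_{(2B)^c}$, so that by the sublinearity of $\mathfrak g_{\mathcal L}$ and for every $\lambda>0$,
\begin{equation*}
\bigl|\{u\in B:\mathfrak g_{\mathcal L}(f)(u)>\lambda\}\bigr|\le
\bigl|\{u\in B:\mathfrak g_{\mathcal L}(f_1)(u)>\lambda/2\}\bigr|+
\bigl|\{u\in B:\mathfrak g_{\mathcal L}(f_2)(u)>\lambda/2\}\bigr|.
\end{equation*}
The goal is to bound each of these two pieces, multiplied by $\lambda$, by $C\,\|f\|_{L^{1,\kappa}_{\rho,\infty}(\mathbb H^n)}|B|^{\kappa}[1+r/\rho(u_0)]^{\theta}$ with some finite $\theta$ independent of $B$ and $\lambda$; taking suprema over $\lambda>0$ and $B$ then yields the claim.

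For the local piece $\mathfrak g_{\mathcal L}(f_1)$, I invoke the weak-type $(1,1)$ boundedness from Theorem \ref{strongg}(2) to write
\begin{equation*}
\lambda\cdot\bigl|\{u\in B:\mathfrak g_{\mathcal L}(f_1)(u)>\lambda/2\}\bigr|\lesssim\int_{2B}|f(v)|\,dv,
\end{equation*}
and then dominate the right-hand side by the defining inequality of $L^{1,\kappa}_{\rho,\theta^*}(\mathbb H^n)$ applied to the ball $2B$, where $\theta^*$ is the infimum from Remark 2.1(iii). This immediately produces a bound of the correct form with exponent $\theta^*$.

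For the global piece I would first derive from Lemma \ref{ker1}, via a Cauchy-integral representation of $s\partial_s e^{-s\mathcal L}$ on an analyticity sector of the semigroup, the pointwise kernel estimate
\begin{equation*}
|\mathcal Q_s(u,v)|\le C_N\,s^{-Q/2}\exp\bigl(-|u^{-1}v|^2/(As)\bigr)\bigl[1+\sqrt{s}/\rho(u)\bigr]^{-N},\qquad s>0,
\end{equation*}
for every $N\in\mathbb N$. Splitting the $s$-integral at $s=|u^{-1}v|^2$ and computing each piece separately yields
\begin{equation*}
\biggl(\int_0^\infty|\mathcal Q_s(u,v)|^2\,\frac{ds}{s}\biggr)^{1/2}\lesssim\frac{1}{|u^{-1}v|^{Q}\bigl[1+|u^{-1}v|/\rho(u)\bigr]^{N}}.
\end{equation*}
Minkowski's integral inequality applied to \eqref{kauv} then gives $\mathfrak g_{\mathcal L}(f_2)(u)\lesssim\int_{(2B)^c}|f(v)||u^{-1}v|^{-Q}[1+|u^{-1}v|/\rho(u)]^{-N}\,dv$. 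Decomposing $(2B)^c=\bigcup_{k\ge1}(2^{k+1}B\setminus 2^{k}B)$, using $|u^{-1}v|\gtrsim 2^{k}r$ on the $k$-th annulus for $u\in B$, bounding each $\int_{2^{k+1}B}|f|$ by the Morrey inequality, and exchanging $\rho(u)$ for $\rho(u_0)$ via \eqref{com2} at the cost of a factor $[1+r/\rho(u_0)]^{NN_0/(N_0+1)}$, I obtain after summing a geometric series (which converges because $\kappa<1$ and $N$ is chosen larger than $\theta^*$)
\begin{equation*}
\mathfrak g_{\mathcal L}(f_2)(u)\le M,\qquad M\lesssim\|f\|_{L^{1,\kappa}_{\rho,\theta^*}(\mathbb H^n)}\,r^{Q(\kappa-1)}\bigl[1+r/\rho(u_0)\bigr]^{\tilde\theta}\quad(u\in B),
\end{equation*}
with some finite $\tilde\theta=\tilde\theta(\theta^*,N_0,N)$. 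A two-case argument on $\lambda$ then closes this piece: if $\lambda\ge 2M$ the level set is empty, while if $\lambda<2M$ the trivial estimate $|\{\cdots\}|\le|B|\approx r^{Q}$ produces $\lambda|\{\cdots\}|\lesssim M|B|\lesssim\|f\|_{L^{1,\kappa}_{\rho,\infty}}|B|^{\kappa}[1+r/\rho(u_0)]^{\tilde\theta}$. Combined with the local bound, the final exponent is $\max(\theta^*,\tilde\theta)$.

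The principal technical obstacle I expect is establishing the $[1+\sqrt{s}/\rho(u)]^{-N}$ decay for the derivative kernel $\mathcal Q_s$; passing from Lemma \ref{ker1} (which concerns $\mathcal P_s$) to $s\partial_s\mathcal P_s$ requires a contour-integral argument on the analyticity sector of $\{e^{-s\mathcal L}\}$, and the Gaussian and $\rho$-decay factors must be tracked carefully since the admissible size of $N$ ultimately depends on $\theta^*$. A secondary, essentially bookkeeping, difficulty is controlling the exponent $\tilde\theta$ produced by the exchange between $\rho(u)$ and $\rho(u_0)$ via \eqref{com2}, so that the final constant is genuinely bounded by $\|f\|_{L^{1,\kappa}_{\rho,\infty}(\mathbb H^n)}$ and independent of $f$.
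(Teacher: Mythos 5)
Your proposal is correct and follows essentially the same route as the paper: the same splitting $f=f\chi_{2B}+f\chi_{(2B)^{\complement}}$, the weak $(1,1)$ bound of Theorem \ref{strongg}(2) for the local part, and for the global part the same pointwise annular estimate with the exchange of $\rho(u)$ for $\rho(u_0)$ via \eqref{com2} and the choice $N\geq\theta^*$ (the paper closes the tail with Chebyshev's inequality applied to $\int_B|\mathfrak g_{\mathcal L}(f_2)|$ rather than your $L^\infty$-bound-plus-dichotomy on $\lambda$, but the two yield the same bound $M|B|$). The one ``principal technical obstacle'' you flag, namely the decay estimate for the kernel $\mathcal Q_s$ of $(s\mathcal L)e^{-s\mathcal L}$, is not an obstacle at all: it is precisely Lemma \ref{kernel}, which the paper quotes from \cite{lin}, so no contour-integral argument needs to be supplied.
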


We need the following lemma which establishes the estimate of the kernel $\mathcal Q_s(u,v)$ related to the nonnegative potential $V$ and plays a key role in the proofs of our main results. Its proof (based on Lemma \ref{ker1}) can be found in \cite{lin}.
\begin{lem}[\cite{lin}]\label{kernel}
Let $V\in RH_q$ with $q\geq Q/2$, and let $\rho(u)$ be the auxiliary function determined by $V$. For every positive integer $N\in\mathbb N$, there exists a positive constant $C_{N}>0$ such that, for any $u$ and $v$ in $\mathbb H^n$,
\begin{equation}\label{WH1}
\big|\mathcal Q_{s}(u,v)\big|\leq C_N\cdot s^{-Q/2}\exp\bigg(-\frac{|v^{-1}u|^2}{As}\bigg)\bigg[1+\frac{\sqrt{s\,}}{\rho(u)}+\frac{\sqrt{s\,}}{\rho(v)}\bigg]^{-N},\quad s>0.
\end{equation}
\end{lem}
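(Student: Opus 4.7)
The plan is to reduce the required pointwise estimate on $\mathcal{Q}_s(u,v)$ to the already-available estimate on $\mathcal{P}_s(u,v)$ from Lemma \ref{ker1} by exploiting the identity
\begin{equation*}
(s\mathcal{L})e^{-s\mathcal{L}} \;=\; -\,s\,\frac{d}{ds}\,e^{-s\mathcal{L}},
\end{equation*}
so that on the level of kernels $\mathcal{Q}_s(u,v) = -s\,\partial_s \mathcal{P}_s(u,v)$. The standard device for passing from a heat kernel bound to a bound on its time derivative is Cauchy's integral formula applied to the holomorphic extension of the semigroup. Since $\mathcal{L}=-\Delta_{\mathbb{H}^n}+V$ is self-adjoint and nonnegative on $L^{2}(\mathbb{H}^n)$, the semigroup $\{e^{-s\mathcal{L}}\}_{s>0}$ admits a holomorphic extension $\{e^{-z\mathcal{L}}\}_{z\in\Sigma_{\pi/2}}$ to the open right half-plane.

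First I would extend the kernel bound of Lemma \ref{ker1} to complex time. Concretely, I would argue that for any $N\in\mathbb{N}$ there is a sector $\Sigma_{\psi}=\{z\neq 0:|\arg z|<\psi\}$ with $\psi\in(0,\pi/2)$ on which the kernel $\mathcal{P}_z(u,v)$ of $e^{-z\mathcal{L}}$ satisfies
\begin{equation*}
\bigl|\mathcal{P}_z(u,v)\bigr| \;\leq\; C_N\,|z|^{-Q/2}\exp\!\bigg(\!-\frac{|v^{-1}u|^{2}}{A|z|}\bigg)\!\bigg[1+\frac{\sqrt{|z|}}{\rho(u)}+\frac{\sqrt{|z|}}{\rho(v)}\bigg]^{-N}\!,\quad z\in\Sigma_{\psi}.
\end{equation*}
This can be obtained either by tracking the argument that yields Lemma \ref{ker1} with the real variable $s$ replaced by a complex parameter (the Trotter product formula combined with the fundamental-solution estimates of $\mathcal{L}$ go through in a cone around the positive real axis), or, more cheaply, by combining the real-variable bound with the subordination/holomorphy of the semigroup and Stein's analytic interpolation.

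Next I would apply Cauchy's formula on the circle $C_s=\{z:|z-s|=s/2\}\subset\Sigma_\psi$ (for $\psi$ large enough, this circle is contained in the sector),
\begin{equation*}
\partial_s\mathcal{P}_s(u,v) \;=\; \frac{1}{2\pi i}\oint_{C_s}\frac{\mathcal{P}_z(u,v)}{(z-s)^{2}}\,dz.
\end{equation*}
On $C_s$ we have $|z|\asymp s$ and $|z-s|=s/2$, so inserting the complex-time bound and using that $\sqrt{|z|}\asymp\sqrt{s}$ yields
\begin{equation*}
\bigl|\partial_s\mathcal{P}_s(u,v)\bigr| \;\leq\; C_N\,s^{-1}\cdot s^{-Q/2}\exp\!\bigg(\!-\frac{|v^{-1}u|^{2}}{A's}\bigg)\!\bigg[1+\frac{\sqrt{s}}{\rho(u)}+\frac{\sqrt{s}}{\rho(v)}\bigg]^{-N}\!,
\end{equation*}
after absorbing a harmless change of constant $A\mapsto A'$ coming from the worst case $|z|=3s/2$ in the Gaussian. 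Multiplying by $s$ gives exactly the bound \eqref{WH1} claimed for $\mathcal{Q}_s(u,v)$.

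The main obstacle is the first step: establishing the complex-time version of Lemma \ref{ker1} with the $\rho$-dependent decay factor. The plain Gaussian upper bound in a sector is classical for any nonnegative self-adjoint generator, but incorporating the potential-dependent weight $[1+\sqrt{|z|}/\rho(u)+\sqrt{|z|}/\rho(v)]^{-N}$ uniformly in $z\in\Sigma_\psi$ requires either reworking the proof of Lemma \ref{ker1}—which rests on the perturbation formula
\begin{equation*}
e^{-s\mathcal{L}}f = e^{s\Delta_{\mathbb{H}^n}}f-\int_0^{s}e^{-(s-\tau)\mathcal{L}}\,V\,e^{\tau\Delta_{\mathbb{H}^n}}f\,d\tau
\end{equation*}
(which extends to complex $s$ after deforming the contour), or by a separate argument that derives the $\rho$-factor directly for $\partial_s\mathcal{P}_s$ by differentiating this Duhamel identity in $s$ and estimating using \eqref{com} together with the $RH_q$-property of $V$. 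Either route is technical but follows the blueprint already used in the proof of Lemma \ref{ker1}; once the complex-time bound is in hand, the Cauchy-formula step is immediate and delivers Lemma \ref{kernel}.
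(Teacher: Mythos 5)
The paper offers no proof of this lemma: it is quoted verbatim from Lin and Liu \cite{lin} (the text says only that the proof, ``based on Lemma \ref{ker1}, can be found in \cite{lin}''), so there is no in-paper argument to compare against. Your strategy is nevertheless the standard one in this literature and, as far as I can tell, the one used in the cited source and in its Euclidean antecedents: write $\mathcal Q_s(u,v)=-s\,\partial_s\mathcal P_s(u,v)$, extend the refined heat-kernel bound of Lemma \ref{ker1} to a sector $\Sigma_\psi$ with $\psi>\pi/6$, and recover $\partial_s\mathcal P_s$ from Cauchy's formula on the circle $|z-s|=s/2$; the bookkeeping you do on that circle ($|z|\asymp s$, $\oint|dz|/|z-s|^2\asymp s^{-1}$, a harmless dilation $A\mapsto A'$ in the Gaussian and a constant in the $\rho$-factor) is correct. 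The one place I would push back is your first suggested route to the complex-time bound: the Trotter product formula yields the domination $0\le\mathcal P_s(u,v)\le h_s(u,v)$ by the free heat kernel, which is a positivity statement meaningful only for real $s$, so it does not simply ``go through in a cone.'' The correct mechanism is your second route: one has the crude sectorial bound $|\mathcal P_z(u,v)|\lesssim|z|^{-Q/2}\exp(-c_\psi|v^{-1}u|^2/|z|)$ from holomorphy of the semigroup together with the real-axis Gaussian bound (a classical Phragm\'en--Lindel\"of/Davies argument), and then a second Phragm\'en--Lindel\"of or Stein-interpolation step between this and the real-axis estimate of Lemma \ref{ker1} imports the factor $[1+\sqrt{|z|}/\rho(u)+\sqrt{|z|}/\rho(v)]^{-N'}$ into the sector with some $N'<N$ depending on the aperture --- which suffices because $N$ is arbitrary. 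With that route made explicit (and a word on why $z\mapsto\mathcal P_z(u,v)$ is holomorphic pointwise, not merely as an $L^2$ operator family), your outline is a complete and correct proof scheme.
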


We are now ready to show our main theorems.
\begin{proof}[Proof of Theorem $\ref{mainthm:1}$]
For any given $f\in L^{p,\kappa}_{\rho,\infty}(\mathbb H^n)$ with $1\leq p<\infty$ and $0<\kappa<1$, suppose that $f\in L^{p,\kappa}_{\rho,\theta^*}(\mathbb H^n)$ for some $\theta^*>0$, where
\begin{equation*}
\theta^*=\inf\big\{\theta>0:f\in L^{p,\kappa}_{\rho,\theta}(\mathbb H^n)\big\}\quad\mathrm{and}\quad\big\|f\big\|_{L^{p,\kappa}_{\rho,\infty}(\mathbb H^n)}=\big\|f\big\|_{L^{p,\kappa}_{\rho,\theta^*}(\mathbb H^n)}.
\end{equation*}
By definition, we only need to show that for each fixed ball $B=B(u_0,r)$ of $\mathbb H^n$, there is some $\vartheta>0$ such that
\begin{equation}\label{Main1}
\bigg(\frac{1}{|B(u_0,r)|^{\kappa}}\int_{B(u_0,r)}\big|\mathfrak{g}_{\mathcal L}(f)(u)\big|^p\,du\bigg)^{1/p}\lesssim\left[1+\frac{r}{\rho(u_0)}\right]^{\vartheta}
\end{equation}
holds true for $f\in L^{p,\kappa}_{\rho,\theta^*}(\mathbb H^n)$ with $(p,\kappa)\in(1,\infty)\times(0,1)$. Using the standard technique, we decompose the function $f$ as
\begin{equation*}
\begin{cases}
f=f_1+f_2\in L^{p,\kappa}_{\rho,\theta^*}(\mathbb H^n);\  &\\
f_1=f\cdot\chi_{2B};\  &\\
f_2=f\cdot\chi_{(2B)^{\complement}},
\end{cases}
\end{equation*}
where $2B$ is the ball centered at $u_0$ of radius $2r$, $(2B)^{\complement}=\mathbb H^n\backslash(2B)$ denotes its complement and $\chi_{E}$ denotes the characteristic function of the set $E$. Then by the sublinearity of $\mathfrak{g}_{\mathcal L}$, we write
\begin{equation*}
\begin{split}
\bigg(\frac{1}{|B(u_0,r)|^{\kappa}}\int_{B(u_0,r)}\big|\mathfrak{g}_{\mathcal L}(f)(u)\big|^p\,du\bigg)^{1/p}
&\leq\bigg(\frac{1}{|B(u_0,r)|^{\kappa}}\int_{B(u_0,r)}\big|\mathfrak{g}_{\mathcal L}(f_1)(u)\big|^p\,du\bigg)^{1/p}\\
&+\bigg(\frac{1}{|B(u_0,r)|^{\kappa}}\int_{B(u_0,r)}\big|\mathfrak{g}_{\mathcal L}(f_2)(u)\big|^p\,du\bigg)^{1/p}\\
&:=I_1+I_2.
\end{split}
\end{equation*}
In what follows, we consider each part separately. For the first term $I_1$, by Theorem \ref{strongg} (1), we have
\begin{equation*}
\begin{split}
I_1&=\bigg(\frac{1}{|B(u_0,r)|^{\kappa}}\int_{B(u_0,r)}\big|\mathfrak{g}_{\mathcal L}(f_1)(u)\big|^p\,du\bigg)^{1/p}\\
&\leq C\cdot\frac{1}{|B|^{\kappa/p}}\bigg(\int_{\mathbb H^n}\big|f_1(u)\big|^p\,du\bigg)^{1/p}\\
&=C\cdot\frac{1}{|B|^{\kappa/p}}\bigg(\int_{2B}\big|f(u)\big|^p\,du\bigg)^{1/p}\\
&\leq C\big\|f\big\|_{L^{p,\kappa}_{\rho,\theta^*}(\mathbb H^n)}\cdot
\frac{|2B|^{\kappa/p}}{|B|^{\kappa/p}}\cdot\left[1+\frac{2r}{\rho(u_0)}\right]^{\theta^*}.
\end{split}
\end{equation*}
Moreover, observe that for any fixed $\theta^*>0$,
\begin{equation}\label{2rx}
1\leq\left[1+\frac{2r}{\rho(u_0)}\right]^{\theta^*}\leq 2^{\theta^*}\left[1+\frac{r}{\rho(u_0)}\right]^{\theta^*},
\end{equation}
which further implies that
\begin{equation*}
\begin{split}
I_1&\leq C_{\theta^*,n}\big\|f\big\|_{L^{p,\kappa}_{\rho,\infty}(\mathbb H^n)}\left[1+\frac{r}{\rho(u_0)}\right]^{\theta^*}.
\end{split}
\end{equation*}
Next we estimate the other term $I_2$. We first claim that the following inequality
\begin{equation}\label{keyWH}
\mathfrak{g}_{\mathcal L}(f_2)(u)\lesssim\int_{(2B)^{\complement}}\bigg[1+\frac{|v^{-1}u|}{\rho(u)}\bigg]^{-N}\frac{1}{|v^{-1}u|^{Q}}\cdot|f(v)|\,dv
\end{equation}
holds for any $u\in B(u_0,r)$. In fact, from \eqref{kauv} and Lemma \ref{kernel}, it follows that
\begin{equation*}
\begin{split}
\mathfrak{g}_{\mathcal L}(f_2)(u)&\leq\bigg(\int_0^{\infty}\bigg|\int_{\mathbb H^n}\frac{C_N}{s^{Q/2}}\cdot\exp\bigg(-\frac{|v^{-1}u|^2}{As}\bigg)
\bigg[1+\frac{\sqrt{s}}{\rho(u)}\bigg]^{-N}|f_2(v)|\,dv\bigg|^2\frac{ds}{s}\bigg)^{1/2}\\
&=\bigg(\int_{|v^{-1}u|^2}^{\infty}\bigg|\int_{\mathbb H^n}\frac{C_N}{s^{Q/2}}\cdot\exp\bigg(-\frac{|v^{-1}u|^2}{As}\bigg)
\bigg[1+\frac{\sqrt{s}}{\rho(u)}\bigg]^{-N}|f_2(v)|\,dv\bigg|^2\frac{ds}{s}\bigg)^{1/2}\\
&+\bigg(\int_0^{|v^{-1}u|^2}\bigg|\int_{\mathbb H^n}\frac{C_N}{s^{Q/2}}\cdot\exp\bigg(-\frac{|v^{-1}u|^2}{As}\bigg)
\bigg[1+\frac{\sqrt{s}}{\rho(u)}\bigg]^{-N}|f_2(v)|\,dv\bigg|^2\frac{ds}{s}\bigg)^{1/2}\\
&:=\mathfrak{g}_{\mathcal L}^{\infty}(f_2)(u)+\mathfrak{g}_{\mathcal L}^{0}(f_2)(u).
\end{split}
\end{equation*}
When $s>|v^{-1}u|^2$, then $\sqrt{s\,}>|v^{-1}u|$, and hence
\begin{equation*}
\begin{split}
\mathfrak{g}_{\mathcal L}^{\infty}(f_2)(u)&\leq\bigg(\int_{|v^{-1}u|^2}^{\infty}\bigg|\int_{\mathbb H^n}\frac{C_N}{s^{Q/2}}\cdot
\bigg[1+\frac{\sqrt{s}}{\rho(u)}\bigg]^{-N}|f_2(v)|\,dv\bigg|^2\frac{ds}{s}\bigg)^{1/2}\\
&\leq\bigg(\int_{|v^{-1}u|^2}^{\infty}\bigg|\int_{\mathbb H^n}\frac{C_N}{s^{Q/2}}\cdot
\bigg[1+\frac{|v^{-1}u|}{\rho(u)}\bigg]^{-N}|f_2(v)|\,dv\bigg|^2\frac{ds}{s}\bigg)^{1/2}\\
&\leq\int_{\mathbb H^n}C_N\cdot
\bigg[1+\frac{|v^{-1}u|}{\rho(u)}\bigg]^{-N}|f_2(v)|\bigg(\int_{|v^{-1}u|^2}^{\infty}\frac{ds}{s^{Q+1}}\bigg)^{1/2}dv\\
&\leq C_{N,n}\int_{(2B)^{\complement}}\bigg[1+\frac{|v^{-1}u|}{\rho(u)}\bigg]^{-N}\frac{1}{|v^{-1}u|^{Q}}\cdot|f(v)|\,dv,
\end{split}
\end{equation*}
where in the third step we have used the Minkowski inequality. On the other hand, a trivial computation leads to that
\begin{equation*}
\begin{split}
&\int_{\mathbb H^n}\frac{C_N}{s^{Q/2}}\cdot\exp\bigg(-\frac{|v^{-1}u|^2}{As}\bigg)
\bigg[1+\frac{\sqrt{s}}{\rho(u)}\bigg]^{-N}|f_2(v)|\,dv\\
&\leq\int_{\mathbb H^n}\frac{C_{N,A}}{s^{Q/2}}\cdot\bigg(\frac{|v^{-1}u|^2}{s}\bigg)^{-(Q/2+N/2+\gamma/2)}
\bigg[1+\frac{\sqrt{s}}{\rho(u)}\bigg]^{-N}|f_2(v)|\,dv\\
&=\int_{\mathbb H^n}\frac{C_{N,A}}{|v^{-1}u|^{Q}}\cdot\bigg(\frac{s}{|v^{-1}u|^2}\bigg)^{\gamma/2}\bigg(\frac{\sqrt{s}}{|v^{-1}u|}\bigg)^{N}
\bigg[1+\frac{\sqrt{s}}{\rho(u)}\bigg]^{-N}|f_2(v)|\,dv,
\end{split}
\end{equation*}
where $\gamma>0$ is a positive constant. It is easy to check that when $0\leq s\leq|v^{-1}u|^2$, one has
\begin{equation*}
\frac{\sqrt{s\,}}{|v^{-1}u|}\leq\frac{\sqrt{s\,}+\rho(u)}{|v^{-1}u|+\rho(u)}.
\end{equation*}
Hence,
\begin{equation*}
\begin{split}
&\int_{\mathbb H^n}\frac{C_N}{s^{Q/2}}\cdot\exp\bigg(-\frac{|v^{-1}u|^2}{As}\bigg)\bigg[1+\frac{\sqrt{s}}{\rho(u)}\bigg]^{-N}|f_2(v)|\,dv\\
&\leq\int_{\mathbb H^n}\frac{C_{N,A}}{|v^{-1}u|^{Q}}\cdot
\bigg(\frac{s}{|v^{-1}u|^2}\bigg)^{\gamma/2}\bigg[\frac{\sqrt{s}+\rho(u)}{|v^{-1}u|+\rho(u)}\bigg]^{N}
\bigg[\frac{\sqrt{s}+\rho(u)}{\rho(u)}\bigg]^{-N}|f_2(v)|\,dv\\
&=\int_{\mathbb H^n}\frac{C_{N,A}}{|v^{-1}u|^{Q}}\cdot\bigg(\frac{s}{|v^{-1}u|^2}\bigg)^{\gamma/2}
\bigg[1+\frac{|v^{-1}u|}{\rho(u)}\bigg]^{-N}|f_2(v)|\,dv.
\end{split}
\end{equation*}
This, together with Minkowski's inequality for integrals, shows that
\begin{equation*}
\begin{split}
\mathfrak{g}_{\mathcal L}^{0}(f_2)(u)&\leq\bigg(\int_0^{|v^{-1}u|^2}\bigg|\int_{\mathbb H^n}\frac{C_{N,A}}{|v^{-1}u|^{Q}}\cdot\bigg(\frac{s}{|v^{-1}u|^2}\bigg)^{\gamma/2}
\bigg[1+\frac{|v^{-1}u|}{\rho(u)}\bigg]^{-N}|f_2(v)|\,dv\bigg|^2\frac{ds}{s}\bigg)^{1/2}\\
&\leq\int_{\mathbb H^n}\frac{C_{N,A}}{|v^{-1}u|^{Q}}\cdot
\bigg[1+\frac{|v^{-1}u|}{\rho(u)}\bigg]^{-N}|f_2(v)|\bigg\{\int_0^{|v^{-1}u|^2}\bigg(\frac{s}{|v^{-1}u|^2}\bigg)^{\gamma}\frac{ds}{s}\bigg\}^{1/2}dv\\
&\leq C_{N,A,\gamma}\int_{(2B)^{\complement}}\bigg[1+\frac{|v^{-1}u|}{\rho(u)}\bigg]^{-N}\frac{1}{|v^{-1}u|^{Q}}\cdot|f(v)|\,dv.
\end{split}
\end{equation*}
Combining the above two estimates produces the desired inequality \eqref{keyWH} for any $u\in B(u_0,r)$. Notice that for any $u\in B(u_0,r)$ and $v\in (2B)^{\complement}$, one has
\begin{equation*}
\big|v^{-1}u\big|=\big|(v^{-1}u_0)\cdot(u_0^{-1}u)\big|\leq\big|v^{-1}u_0\big|+\big|u_0^{-1}u\big|
\end{equation*}
and
\begin{equation*}
\big|v^{-1}u\big|=\big|(v^{-1}u_0)\cdot(u_0^{-1}u)\big|\geq\big|v^{-1}u_0\big|-\big|u_0^{-1}u\big|.
\end{equation*}
Thus,
\begin{equation*}
\frac{1}{\,2\,}\big|v^{-1}u_0\big|\leq\big|v^{-1}u\big|\leq\frac{3}{\,2\,}\big|v^{-1}u_0\big|,
\end{equation*}
i.e., $|v^{-1}u|\approx|v^{-1}u_0|$. This fact, along with \eqref{keyWH}, implies that for any $u\in B(u_0,r)$,
\begin{equation}\label{Talpha}
\begin{split}
\big|\mathfrak{g}_{\mathcal L}(f_2)(u)\big|
&\leq C\int_{(2B)^{\complement}}\bigg[1+\frac{|v^{-1}u_0|}{\rho(u)}\bigg]^{-N}\frac{1}{|v^{-1}u_0|^{Q}}\cdot|f(v)|\,dv\\
&=C\sum_{k=1}^\infty\int_{2^kr\leq|v^{-1}u_0|<2^{k+1}r}\bigg[1+\frac{|v^{-1}u_0|}{\rho(u)}\bigg]^{-N}
\frac{1}{|v^{-1}u_0|^{Q}}\cdot|f(v)|\,dv\\
&\leq C\sum_{k=1}^\infty\frac{1}{|B(u_0,2^{k+1}r)|}
\int_{|v^{-1}u_0|<2^{k+1}r}\bigg[1+\frac{2^kr}{\rho(u)}\bigg]^{-N}|f(v)|\,dv.
\end{split}
\end{equation}
In view of \eqref{com2} and \eqref{2rx}, we can further obtain
\begin{align}\label{Tf2}
\big|\mathfrak{g}_{\mathcal L}(f_2)(u)\big|
&\leq C\sum_{k=1}^\infty\frac{1}{|B(u_0,2^{k+1}r)|}\notag\\
&\times\int_{|v^{-1}u_0|<2^{k+1}r}\left[1+\frac{r}{\rho(u_0)}\right]^{N\cdot\frac{N_0}{N_0+1}}
\left[1+\frac{2^kr}{\rho(u_0)}\right]^{-N}|f(v)|\,dv\notag\\
&\leq C\sum_{k=1}^\infty\frac{1}{|B(u_0,2^{k+1}r)|}\notag\\
&\times\int_{B(u_0,2^{k+1}r)}\left[1+\frac{r}{\rho(u_0)}\right]^{N\cdot\frac{N_0}{N_0+1}}
\left[1+\frac{2^{k+1}r}{\rho(u_0)}\right]^{-N}|f(v)|\,dv.
\end{align}
We consider each term in the sum of \eqref{Tf2} separately. By using H\"older's inequality, we can deduce that for each fixed $k\in\mathbb N$,
\begin{equation*}
\begin{split}
&\frac{1}{|B(u_0,2^{k+1}r)|}\int_{B(u_0,2^{k+1}r)}\big|f(v)\big|\,dv\\
&\leq\frac{1}{|B(u_0,2^{k+1}r)|}\bigg(\int_{B(u_0,2^{k+1}r)}\big|f(v)\big|^p\,dv\bigg)^{1/p}
\bigg(\int_{B(u_0,2^{k+1}r)}1\,dv\bigg)^{1/{p'}}\\
&\leq C\big\|f\big\|_{L^{p,\kappa}_{\rho,\theta^*}(\mathbb H^n)}\cdot\frac{|B(u_0,2^{k+1}r)|^{{\kappa}/p}}{|B(u_0,2^{k+1}r)|^{1/p}}
\left[1+\frac{2^{k+1}r}{\rho(u_0)}\right]^{\theta^*}.
\end{split}
\end{equation*}
This allows us to obtain
\begin{equation*}
\begin{split}
I_2&\leq C\big\|f\big\|_{L^{p,\kappa}_{\rho,\theta^*}(\mathbb H^n)}\cdot\frac{|B(u_0,r)|^{1/p}}{|B(u_0,r)|^{{\kappa}/p}}
\sum_{k=1}^\infty\frac{|B(u_0,2^{k+1}r)|^{{\kappa}/p}}{|B(u_0,2^{k+1}r)|^{1/p}}
\left[1+\frac{r}{\rho(u_0)}\right]^{N\cdot\frac{N_0}{N_0+1}}\left[1+\frac{2^{k+1}r}{\rho(u_0)}\right]^{-N+\theta^*}\\
&=C\big\|f\big\|_{L^{p,\kappa}_{\rho,\theta^*}(\mathbb H^n)}\left[1+\frac{r}{\rho(u_0)}\right]^{N\cdot\frac{N_0}{N_0+1}}
\sum_{k=1}^\infty\frac{|B(u_0,r)|^{{(1-\kappa)}/p}}{|B(u_0,2^{k+1}r)|^{{(1-\kappa)}/p}}
\left[1+\frac{2^{k+1}r}{\rho(u_0)}\right]^{-N+\theta^*}.
\end{split}
\end{equation*}
Consequently, by choosing $N$ large enough such that $N\geq\theta^*$, and the last series is convergent. Then we have
\begin{equation*}
\begin{split}
I_2&\leq C\big\|f\big\|_{L^{p,\kappa}_{\rho,\infty}(\mathbb H^n)}
\left[1+\frac{r}{\rho(u_0)}\right]^{N\cdot\frac{N_0}{N_0+1}}\sum_{k=1}^\infty\left(\frac{|B(u_0,r)|}{|B(u_0,2^{k+1}r)|}\right)^{{(1-\kappa)}/p}\\
&\leq C\big\|f\big\|_{L^{p,\kappa}_{\rho,\infty}(\mathbb H^n)}
\left[1+\frac{r}{\rho(u_0)}\right]^{N\cdot\frac{N_0}{N_0+1}},
\end{split}
\end{equation*}
where the last inequality follows from the fact that $1-\kappa>0$. Summing up the above estimates for $I_1$ and $I_2$, and letting $\vartheta=\max\big\{\theta^*,N\cdot\frac{N_0}{N_0+1}\big\}$, with $N\geq\theta^*$, we obtain the desired inequality \eqref{Main1}. This concludes the proof of Theorem \ref{mainthm:1}.
\end{proof}

\begin{proof}[Proof of Theorem $\ref{mainthm:2}$]
According to the definition, it suffices to prove that for each given ball $B=B(u_0,r)$ of $\mathbb H^n$, there is some $\vartheta>0$ such that
\begin{equation}\label{Main2}
\frac{1}{|B(u_0,r)|^{\kappa}}\sup_{\lambda>0}\lambda\cdot\big|\big\{u\in B(u_0,r):|\mathfrak{g}_{\mathcal L}(f)(u)|>\lambda\big\}\big|
\lesssim\left[1+\frac{r}{\rho(u_0)}\right]^{\vartheta}
\end{equation}
holds true for given $f\in L^{1,\kappa}_{\rho,\theta^*}(\mathbb H^n)$ with some $\theta^*>0$ and $0<\kappa<1$. We decompose the function $f$ as
\begin{equation*}
\begin{cases}
f=f_1+f_2\in L^{1,\kappa}_{\rho,\theta^*}(\mathbb H^n);\  &\\
f_1=f\cdot\chi_{2B};\  &\\
f_2=f\cdot\chi_{(2B)^{\complement}}.
\end{cases}
\end{equation*}
Then for any given $\lambda>0$, we can write
\begin{equation*}
\begin{split}
&\frac{1}{|B(u_0,r)|^{\kappa}}\lambda\cdot\big|\big\{u\in B(u_0,r):|\mathfrak{g}_{\mathcal L}(f)(u)|>\lambda\big\}\big|\\
&\leq\frac{1}{|B(u_0,r)|^{\kappa}}\lambda\cdot\big|\big\{u\in B(u_0,r):|\mathfrak{g}_{\mathcal L}(f_1)(u)|>\lambda/2\big\}\big|\\
&+\frac{1}{|B(u_0,r)|^{\kappa}}\lambda\cdot\big|\big\{u\in B(u_0,r):|\mathfrak{g}_{\mathcal L}(f_2)(u)|>\lambda/2\big\}\big|\\
&:=J_1+J_2.
\end{split}
\end{equation*}
We first give the estimate for the term $J_1$. By Theorem \ref{strongg} (2), we get
\begin{equation*}
\begin{split}
J_1&=\frac{1}{|B(u_0,r)|^{\kappa}}\lambda\cdot\big|\big\{u\in B(u_0,r):|\mathfrak{g}_{\mathcal L}(f_1)(u)|>\lambda/2\big\}\big|\\
&\leq C\cdot\frac{1}{|B|^{\kappa}}\bigg(\int_{\mathbb H^n}\big|f_1(u)\big|\,du\bigg)\\
&=C\cdot\frac{1}{|B|^{\kappa}}\bigg(\int_{2B}\big|f(u)\big|\,du\bigg)\\
&\leq C\big\|f\big\|_{L^{1,\kappa}_{\rho,\theta^*}(\mathbb H^n)}\cdot\frac{|2B|^{\kappa}}{|B|^{\kappa}}\left[1+\frac{2r}{\rho(u_0)}\right]^{\theta^*}.
\end{split}
\end{equation*}
Therefore, in view of \eqref{2rx}, we have
\begin{equation*}
J_1\leq C_{\theta^*,n}\big\|f\big\|_{L^{1,\kappa}_{\rho,\infty}(\mathbb H^n)}\left[1+\frac{r}{\rho(u_0)}\right]^{\theta^*}.
\end{equation*}
As for the second term $J_2$, by using the pointwise inequality \eqref{Tf2} and Chebyshev's inequality, we can deduce that
\begin{equation}\label{Tf2pr}
\begin{split}
J_2&=\frac{1}{|B(u_0,r)|^{\kappa}}\lambda\cdot\big|\big\{u\in B(u_0,r):|\mathfrak{g}_{\mathcal L}(f_2)(u)|>\lambda/2\big\}\big|\\
&\leq\frac{2}{|B(u_0,r)|^{\kappa}}\bigg(\int_{B(u_0,r)}\big|\mathfrak{g}_{\mathcal L}(f_2)(u)\big|\,du\bigg)\\
&\leq C\cdot\frac{|B(u_0,r)|}{|B(u_0,r)|^{\kappa}}
\sum_{k=1}^\infty\frac{1}{|B(u_0,2^{k+1}r)|}\\
&\times\int_{B(u_0,2^{k+1}r)}\left[1+\frac{r}{\rho(u_0)}\right]^{N\cdot\frac{N_0}{N_0+1}}
\left[1+\frac{2^{k+1}r}{\rho(u_0)}\right]^{-N}|f(v)|\,dv.
\end{split}
\end{equation}
We consider each term in the sum of \eqref{Tf2pr} separately. For each fixed $k\in\mathbb N$, we have
\begin{equation*}
\begin{split}
&\frac{1}{|B(u_0,2^{k+1}r)|}\int_{B(u_0,2^{k+1}r)}|f(v)|\,dv\\
&\leq C\big\|f\big\|_{L^{1,\kappa}_{\rho,\theta^*}(\mathbb H^n)}\cdot
\frac{|B(u_0,2^{k+1}r)|^{\kappa}}{|B(u_0,2^{k+1}r)|}\left[1+\frac{2^{k+1}r}{\rho(u_0)}\right]^{\theta^*}.
\end{split}
\end{equation*}
Consequently,
\begin{equation*}
\begin{split}
J_2&\leq C\big\|f\big\|_{L^{1,\kappa}_{\rho,\theta^*}(\mathbb H^n)}
\cdot\frac{|B(u_0,r)|}{|B(u_0,r)|^{\kappa}}\sum_{k=1}^\infty\frac{|B(u_0,2^{k+1}r)|^{\kappa}}{|B(u_0,2^{k+1}r)|}
\left[1+\frac{r}{\rho(u_0)}\right]^{N\cdot\frac{N_0}{N_0+1}}\left[1+\frac{2^{k+1}r}{\rho(u_0)}\right]^{-N+\theta^*}\\
&=C\big\|f\big\|_{L^{1,\kappa}_{\rho,\theta^*}(\mathbb H^n)}
\left[1+\frac{r}{\rho(u_0)}\right]^{N\cdot\frac{N_0}{N_0+1}}\sum_{k=1}^\infty\frac{|B(u_0,r)|^{{1-\kappa}}}{|B(u_0,2^{k+1}r)|^{{1-\kappa}}}
\left[1+\frac{2^{k+1}r}{\rho(u_0)}\right]^{-N+\theta^*}.
\end{split}
\end{equation*}
Therefore, by selecting $N$ large enough such that $N\geq\theta^*$, we thus have
\begin{equation*}
\begin{split}
J_2&\leq C\big\|f\big\|_{L^{1,\kappa}_{\rho,\infty}(\mathbb H^n)}\left[1+\frac{r}{\rho(u_0)}\right]^{N\cdot\frac{N_0}{N_0+1}}
\sum_{k=1}^\infty\left(\frac{|B(u_0,r)|}{|B(u_0,2^{k+1}r)|}\right)^{{(1-\kappa)}}\\
&\leq C\big\|f\big\|_{L^{1,\kappa}_{\rho,\infty}(\mathbb H^n)}
\left[1+\frac{r}{\rho(u_0)}\right]^{N\cdot\frac{N_0}{N_0+1}},
\end{split}
\end{equation*}
where the last inequality holds because $0<\kappa<1$. Now we choose $\vartheta=\max\big\{\theta^*,N\cdot\frac{N_0}{N_0+1}\big\}$ with $N\geq\theta^*$. Summing up the above estimates for $J_1$ and $J_2$, and then taking the supremum over all $\lambda>0$, we obtain the desired inequality \eqref{Main2}. This concludes the proof of Theorem \ref{mainthm:2}.
\end{proof}

We also consider the Littlewood-Paley function with respect to the Poisson semigroup $\big\{e^{-s\sqrt{\mathcal L}}\big\}_{s>0}$, which is defined by
\begin{align}\label{gp}
\mathfrak{g}_{\sqrt{\mathcal L}}(f)(u)&:=\bigg(\int_0^{\infty}\Big|s\cdot\frac{d}{ds}e^{-s\sqrt{\mathcal L}}f(u)\Big|^2\frac{ds}{s}\bigg)^{1/2}\notag\\
&=\bigg(\int_0^{\infty}\big|(s\sqrt{\mathcal L})e^{-s\sqrt{\mathcal L}}f(u)\big|^2\frac{ds}{s}\bigg)^{1/2}.
\end{align}
As illustrated below, this integral operator $\mathfrak{g}_{\sqrt{\mathcal L}}$ will be dominated by $\mathfrak{g}_{\mathcal L}$. To this end, we first recall the subordination formula
\begin{equation}\label{subor}
e^{-s\sqrt{\mathcal L}}f(u)=\frac{1}{\sqrt{\pi}}\int_0^{\infty}\frac{e^{-v}}{\sqrt{v}}\cdot e^{-\frac{s^2}{4v}\mathcal L}f(u)\,dv,\quad u\in\mathbb H^n.
\end{equation}
This allows us to obtain
\begin{equation*}
\begin{split}
\big|(s\sqrt{\mathcal L})e^{-s\sqrt{\mathcal L}}f(u)\big|
&=\bigg|s\cdot\frac{d}{ds}e^{-s\sqrt{\mathcal L}}f(u)\bigg|\\
&=\bigg|\frac{2}{\sqrt{\pi}}\int_0^{\infty}\frac{e^{-v}}{\sqrt{v}}\cdot\Big(\frac{s^2}{4v}\mathcal L\Big)e^{-\frac{s^2}{4v}\mathcal L}f(u)\,dv\bigg|.
\end{split}
\end{equation*}
From this, it follows that for all $u\in\mathbb H^n$,
\begin{equation*}
\begin{split}
\mathfrak{g}_{\sqrt{\mathcal L}}(f)(u)&\leq\frac{2}{\sqrt{\pi}}\int_0^{\infty}\frac{e^{-v}}{\sqrt{v}}\cdot
\bigg(\int_0^{\infty}\Big|\Big(\frac{s^2}{4v}\mathcal L\Big)e^{-\frac{s^2}{4v}\mathcal L}f(u)\Big|^2\frac{ds}{s}\bigg)^{1/2}dv\\
&=\frac{2}{\sqrt{\pi}}\int_0^{\infty}\frac{e^{-v}}{\sqrt{v}}\cdot
\bigg(\int_0^{\infty}\big|(t\mathcal L)e^{-t\mathcal L}f(u)\big|^2\frac{dt}{2t}\bigg)^{1/2}dv\\
&=\mathfrak{g}_{\mathcal L}(f)(u)\cdot\frac{\sqrt{2}}{\sqrt{\pi}}\int_0^{\infty}\frac{e^{-v}}{\sqrt{v}}\,dv=\sqrt{2}\cdot\mathfrak{g}_{\mathcal L}(f)(u).
\end{split}
\end{equation*}
Hence, we know that under the conditions of Theorems \ref{mainthm:1} and \ref{mainthm:2}, the conclusions also hold for the operator $\mathfrak{g}_{\sqrt{\mathcal L}}$ defined in \eqref{gp}.
\begin{thm}\label{cor:3}
Let $\rho$ be as in \eqref{rho}. Let $1<p<\infty$ and $0<\kappa<1$. If $V\in RH_q$ with $q\geq Q/2$, then the operator $\mathfrak{g}_{\sqrt{\mathcal L}}$ is bounded on $L^{p,\kappa}_{\rho,\infty}(\mathbb H^n)$.
\end{thm}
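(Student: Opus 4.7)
The proof is essentially already contained in the paragraph immediately preceding the statement, where the subordination formula \eqref{subor} is used together with Minkowski's inequality for integrals to derive the pointwise comparison
\[
\mathfrak{g}_{\sqrt{\mathcal L}}(f)(u)\leq \sqrt{2}\cdot\mathfrak{g}_{\mathcal L}(f)(u),\qquad u\in\mathbb H^n.
\]
Thus the plan is simply to promote this pointwise bound to an inequality of norms in $L^{p,\kappa}_{\rho,\infty}(\mathbb H^n)$ and invoke Theorem \ref{mainthm:1}.

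The first step is to record that pointwise domination is inherited by each of the scale norms $\|\cdot\|_{L^{p,\kappa}_{\rho,\theta}(\mathbb H^n)}$. Indeed, if $|h(u)|\leq C|k(u)|$ for all $u\in\mathbb H^n$, then integrating over an arbitrary ball $B(u_0,r)$ gives
\[
\bigg(\frac{1}{|B(u_0,r)|^{\kappa}}\int_{B(u_0,r)}|h(u)|^{p}\,du\bigg)^{1/p}\leq C\bigg(\frac{1}{|B(u_0,r)|^{\kappa}}\int_{B(u_0,r)}|k(u)|^{p}\,du\bigg)^{1/p},
\]
so dividing by $[1+r/\rho(u_0)]^{\theta}$ and taking the supremum over balls yields $\|h\|_{L^{p,\kappa}_{\rho,\theta}(\mathbb H^n)}\leq C\|k\|_{L^{p,\kappa}_{\rho,\theta}(\mathbb H^n)}$. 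In particular, if $k\in L^{p,\kappa}_{\rho,\theta}(\mathbb H^n)$ for some $\theta>0$, then $h\in L^{p,\kappa}_{\rho,\theta}(\mathbb H^n)$, and hence the infimum $\theta^{*}$ defining the norm $\|h\|_{\star}$ in \eqref{newnorm} is dominated by the corresponding infimum for $k$.

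The second step assembles the pieces. Given $f\in L^{p,\kappa}_{\rho,\infty}(\mathbb H^n)$, Theorem \ref{mainthm:1} yields $\mathfrak{g}_{\mathcal L}(f)\in L^{p,\kappa}_{\rho,\infty}(\mathbb H^n)$ with $\|\mathfrak{g}_{\mathcal L}(f)\|_{L^{p,\kappa}_{\rho,\infty}(\mathbb H^n)}\lesssim\|f\|_{L^{p,\kappa}_{\rho,\infty}(\mathbb H^n)}$. Applying the observation of the preceding paragraph with $h=\mathfrak{g}_{\sqrt{\mathcal L}}(f)$, $k=\mathfrak{g}_{\mathcal L}(f)$ and $C=\sqrt{2}$ gives
\[
\bigl\|\mathfrak{g}_{\sqrt{\mathcal L}}(f)\bigr\|_{L^{p,\kappa}_{\rho,\infty}(\mathbb H^n)}\leq \sqrt{2}\bigl\|\mathfrak{g}_{\mathcal L}(f)\bigr\|_{L^{p,\kappa}_{\rho,\infty}(\mathbb H^n)}\lesssim\|f\|_{L^{p,\kappa}_{\rho,\infty}(\mathbb H^n)},
\]
which is the desired boundedness. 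Since no new kernel estimates or decompositions are required, there is no genuine obstacle; the only point demanding a brief verification is the monotonicity of the $\|\cdot\|_{\star}$ norm under pointwise domination, which is immediate from the way the infimum defining $\theta^{*}$ interacts with the scale $\{L^{p,\kappa}_{\rho,\theta}\}_{\theta>0}$.
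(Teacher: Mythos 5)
Your proposal is correct and follows essentially the same route as the paper: the paper derives the pointwise bound $\mathfrak{g}_{\sqrt{\mathcal L}}(f)(u)\leq\sqrt{2}\,\mathfrak{g}_{\mathcal L}(f)(u)$ from the subordination formula \eqref{subor} and then deduces Theorem \ref{cor:3} directly from Theorem \ref{mainthm:1}. Your added verification that pointwise domination is inherited by each scale norm $\|\cdot\|_{L^{p,\kappa}_{\rho,\theta}(\mathbb H^n)}$, and hence by the infimum defining $\|\cdot\|_{\star}$, is a small detail the paper leaves implicit but is entirely sound.
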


\begin{thm}\label{cor:4}
Let $\rho$ be as in \eqref{rho}. Let $p=1$ and $0<\kappa<1$. If $V\in RH_q$ with $q\geq Q/2$, then the operator $\mathfrak{g}_{\sqrt{\mathcal L}}$ is bounded from $L^{1,\kappa}_{\rho,\infty}(\mathbb H^n)$ to $WL^{1,\kappa}_{\rho,\infty}(\mathbb H^n)$.
\end{thm}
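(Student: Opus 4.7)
The plan is to deduce Theorem \ref{cor:4} as a short corollary of Theorem \ref{mainthm:2}, using the pointwise domination
\[
\mathfrak{g}_{\sqrt{\mathcal L}}(f)(u)\leq \sqrt{2}\,\mathfrak{g}_{\mathcal L}(f)(u),\quad u\in\mathbb H^n,
\]
which has already been derived via the subordination formula \eqref{subor} in the paragraph preceding Theorem \ref{cor:3}. This single pointwise bound is the only input needed; no new kernel estimate or Calder\'on--Zygmund decomposition is required.

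First I would fix $f\in L^{1,\kappa}_{\rho,\infty}(\mathbb H^n)$, pick $\theta^{*}>0$ realising $\|f\|_{L^{1,\kappa}_{\rho,\infty}(\mathbb H^n)}=\|f\|_{L^{1,\kappa}_{\rho,\theta^{*}}(\mathbb H^n)}$, and fix an arbitrary ball $B(u_0,r)\subset\mathbb H^n$. Because $\sqrt{2}\,\mathfrak{g}_{\mathcal L}(f)$ dominates $\mathfrak{g}_{\sqrt{\mathcal L}}(f)$ everywhere, for every $\lambda>0$ I have the elementary set inclusion
\[
\bigl\{u\in B(u_0,r):\mathfrak{g}_{\sqrt{\mathcal L}}(f)(u)>\lambda\bigr\}\subseteq\bigl\{u\in B(u_0,r):\mathfrak{g}_{\mathcal L}(f)(u)>\lambda/\sqrt{2}\bigr\},
\]
and hence
\[
\lambda\cdot\bigl|\bigl\{u\in B(u_0,r):\mathfrak{g}_{\sqrt{\mathcal L}}(f)(u)>\lambda\bigr\}\bigr|\leq\sqrt{2}\cdot\bigl(\lambda/\sqrt{2}\bigr)\cdot\bigl|\bigl\{u\in B(u_0,r):\mathfrak{g}_{\mathcal L}(f)(u)>\lambda/\sqrt{2}\bigr\}\bigr|.
\]
Taking the supremum over $\lambda>0$ (equivalently over $\mu=\lambda/\sqrt{2}>0$), dividing by $|B(u_0,r)|^{\kappa}$, and inserting the bound \eqref{Main2} proved in Theorem \ref{mainthm:2}, I obtain
\[
\frac{1}{|B(u_0,r)|^{\kappa}}\sup_{\lambda>0}\lambda\cdot\bigl|\bigl\{u\in B(u_0,r):\mathfrak{g}_{\sqrt{\mathcal L}}(f)(u)>\lambda\bigr\}\bigr|\lesssim\|f\|_{L^{1,\kappa}_{\rho,\infty}(\mathbb H^n)}\left[1+\frac{r}{\rho(u_0)}\right]^{\vartheta}
\]
for the same $\vartheta>0$ appearing in Theorem \ref{mainthm:2}. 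Since the implicit constant and $\vartheta$ are independent of the ball, this places $\mathfrak{g}_{\sqrt{\mathcal L}}(f)$ in $WL^{1,\kappa}_{\rho,\vartheta}(\mathbb H^n)\subseteq WL^{1,\kappa}_{\rho,\infty}(\mathbb H^n)$, with $\|\cdot\|_{\star\star}$-quasi-norm controlled by $\|f\|_{L^{1,\kappa}_{\rho,\infty}(\mathbb H^n)}$, which is the desired conclusion.

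I do not anticipate any genuine obstacle: all the analytic work (the pointwise kernel estimate of Lemma \ref{kernel}, the endpoint bound of Theorem \ref{strongg}(2), the splitting $f=f_1+f_2$, and the management of the factor $[1+r/\rho(u_0)]^{\theta}$ through \eqref{com2} and \eqref{2rx}) is already packaged inside Theorem \ref{mainthm:2}. The only point worth verifying is the trivial fact that a scalar pointwise domination transfers $L^{1,\kappa}_{\rho,\infty}\to WL^{1,\kappa}_{\rho,\infty}$ boundedness, which is precisely the short supremum argument above; in particular the parameter $\theta^{**}$ in the definition of $\|\cdot\|_{WL^{1,\kappa}_{\rho,\infty}(\mathbb H^n)}$ can be taken $\leq\vartheta$, closing the estimate.
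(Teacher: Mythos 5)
Your proposal is correct and follows exactly the route the paper takes: the paper derives the pointwise bound $\mathfrak{g}_{\sqrt{\mathcal L}}(f)(u)\leq\sqrt{2}\,\mathfrak{g}_{\mathcal L}(f)(u)$ from the subordination formula \eqref{subor} and then simply invokes Theorem \ref{mainthm:2}. Your explicit supremum/set-inclusion argument just spells out the transfer step the paper leaves implicit.
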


\section{Boundedness of the Lusin area integral $\mathcal{S}_{\mathcal L}$}\label{sec4}
In this section, we will study the boundedness properties of the Lusin area integral $\mathcal{S}_{\mathcal L}$ acting on $L^{p,\kappa}_{\rho,\infty}(\mathbb H^n)$ for $(p,\kappa)\in[1,\infty)\times(0,1)$. First recall the definition of the Lusin area integral $\mathcal{S}_{\mathcal L}$, which is given by
\begin{equation}\label{kauvw}
\begin{split}
\mathcal{S}_{\mathcal L}(f)(u)&=\bigg(\iint_{\Gamma(u)}\big|(s\mathcal L)e^{-s\mathcal L}f(v)\big|^2\frac{dvds}{s^{Q/2+1}}\bigg)^{1/2}\\
&=\bigg(\iint_{\Gamma(u)}\bigg|\int_{\mathbb H^n}\mathcal Q_s(v,w)f(w)\,dw\bigg|^2\frac{dvds}{s^{Q/2+1}}\bigg)^{1/2},
\end{split}
\end{equation}
where $\big\{e^{-s\mathcal L}\big\}_{s>0}$ is the semigroup generated by $\mathcal L$ and $\mathcal Q_{s}(v,w)$ denotes the kernel of $(s\mathcal L)e^{-s\mathcal L},s>0$.
Now we present the main results of this section.
\begin{thm}\label{mainthm:3}
Let $\rho$ be as in \eqref{rho}. Let $1<p<\infty$ and $0<\kappa<1$. If $V\in RH_q$ with $q\geq Q/2$, then the operator $\mathcal{S}_{\mathcal L}$ is bounded on $L^{p,\kappa}_{\rho,\infty}(\mathbb H^n)$.
\end{thm}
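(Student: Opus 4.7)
The plan is to mirror the structure of the proof of Theorem~\ref{mainthm:1}, replacing the pointwise estimate for $\mathfrak{g}_{\mathcal L}(f_2)$ with a comparable pointwise estimate for $\mathcal{S}_{\mathcal L}(f_2)$. Fix $f \in L^{p,\kappa}_{\rho,\theta^*}(\mathbb H^n)$ and a ball $B = B(u_0, r)$, and write $f = f_1 + f_2$ with $f_1 = f\chi_{2B}$ and $f_2 = f\chi_{(2B)^{\complement}}$. By sublinearity, it suffices to produce some $\vartheta > 0$ such that
\begin{equation*}
\bigg(\frac{1}{|B|^{\kappa}}\int_{B}\big|\mathcal{S}_{\mathcal L}(f_i)(u)\big|^{p}du\bigg)^{1/p} \lesssim \left[1+\frac{r}{\rho(u_0)}\right]^{\vartheta}, \qquad i = 1,2.
\end{equation*}
For $i=1$, apply the strong-type estimate in Theorem~\ref{strongs}(1) together with the definition of the Morrey norm (using \eqref{2rx} to absorb the factor $[1+2r/\rho(u_0)]^{\theta^*}$), exactly as in the $\mathfrak{g}_{\mathcal L}$ case.

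The main work is the estimate for $i=2$. I will prove the pointwise inequality
\begin{equation*}
\mathcal{S}_{\mathcal L}(f_2)(u) \lesssim \int_{(2B)^{\complement}} \bigg[1+\frac{|w^{-1}u|}{\rho(u)}\bigg]^{-N} \frac{|f(w)|}{|w^{-1}u|^{Q}}\, dw, \qquad u \in B(u_0,r).
\end{equation*}
By the integral version of Minkowski's inequality applied to the $L^2$-norm over $\Gamma(u)$ with measure $dv\,ds/s^{Q/2+1}$, we have
\begin{equation*}
\mathcal{S}_{\mathcal L}(f_2)(u) \leq \int_{\mathbb H^n} |f_2(w)| \bigg(\iint_{\Gamma(u)} |\mathcal Q_s(v,w)|^2 \frac{dv\,ds}{s^{Q/2+1}}\bigg)^{1/2} dw.
\end{equation*}
Using Lemma~\ref{kernel}, I split the $s$-integral into $\{s \leq |u^{-1}w|^2/4\}$ and $\{s > |u^{-1}w|^2/4\}$. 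In the first range, since $|u^{-1}v|<\sqrt{s}$ implies $|v^{-1}w| \geq |u^{-1}w|/2$, the Gaussian factor $\exp(-|v^{-1}w|^2/As)$ can be dominated by $C(s/|u^{-1}w|^2)^{M}$ for any $M > 0$; the $v$-integral over $|u^{-1}v|<\sqrt{s}$ contributes volume $\approx s^{Q/2}$, and choosing $M$ large enough makes the resulting $s$-integral converge with value $\approx |u^{-1}w|^{-2Q}$. In the second range, the Gaussian is dominated by $1$ and the $s$-integral converges at infinity also yielding $|u^{-1}w|^{-2Q}$; moreover, since $\sqrt{s} > |u^{-1}w|/2$ there, the factor $[1+\sqrt{s}/\rho(v)]^{-N}$ supplies a decay of order $[1+|u^{-1}w|/\rho(v)]^{-N}$, which by \eqref{com} is comparable (up to powers absorbed into the final $\vartheta$) to $[1+|u^{-1}w|/\rho(u)]^{-N}$.

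Once this pointwise bound is established, the rest of the argument is identical to that of Theorem~\ref{mainthm:1}. Specifically, using $|w^{-1}u| \approx |w^{-1}u_0|$ for $u \in B(u_0,r)$ and $w \in (2B)^{\complement}$, I decompose the right-hand side into dyadic annuli $\{2^{k}r \leq |w^{-1}u_0| < 2^{k+1}r\}$ for $k \geq 1$, apply \eqref{com2} to replace $\rho(u)$ by $\rho(u_0)$ at the cost of a factor $[1+r/\rho(u_0)]^{N\cdot N_0/(N_0+1)}$, use Hölder's inequality together with the definition of $\|f\|_{L^{p,\kappa}_{\rho,\theta^*}(\mathbb H^n)}$ on each annulus, and sum the resulting geometric series (convergent because $1-\kappa > 0$, provided $N \geq \theta^*$). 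Setting $\vartheta = \max\{\theta^*, N\cdot N_0/(N_0+1)\}$ yields the required estimate.

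The principal obstacle is the pointwise estimate for $\mathcal{S}_{\mathcal L}(f_2)$: compared with the Littlewood-Paley case, the additional $v$-integration over the cone must be controlled uniformly in $w$, and the two regimes in the $s$-split must be matched so that the off-diagonal Gaussian decay (small $s$) and the $\rho$-decay $[1+\sqrt{s}/\rho(v)]^{-N}$ (large $s$) combine into a single factor $[1+|u^{-1}w|/\rho(u)]^{-N}$. Care is needed with Lemma~\ref{N0} to pass from $\rho(v)$ to $\rho(u)$ when $v$ ranges over the cone, but since the cone radius $\sqrt{s}$ is controlled by $|u^{-1}w|$ in the dominant regime, this transfer is harmless and only contributes to the exponent $\vartheta$.
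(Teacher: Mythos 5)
Your overall strategy is the same as the paper's: split $f$ over a dilate of $B$ and its complement, invoke Theorem \ref{strongs}(1) for the local part, prove a pointwise bound for $\mathcal S_{\mathcal L}(f_2)$ by Minkowski's inequality and a split of the $s$-integral at $|w^{-1}u|^2/4$, and then run the dyadic-annuli, \eqref{com2}, H\"older, geometric-series argument from Theorem \ref{mainthm:1}. (The paper uses $4B$ where you use $2B$; that is immaterial.) Two points in your sketch need repair. First, in the small-$s$ regime you bound the Gaussian by $C(s/|u^{-1}w|^2)^M$ and extract only the factor $|u^{-1}w|^{-Q}$, with no $\rho$-decay; but the factor $\bigl[1+|w^{-1}u|/\rho\bigr]^{-N}$ is indispensable in that regime as well, since without it the dyadic sum carries the uncancelled growth $[1+2^{k}r/\rho(u_0)]^{\theta^*}$, and the geometric decay $2^{-kQ(1-\kappa)/p}$ does not beat $2^{k\theta^*}$ when $\theta^*$ is large, so the series need not converge. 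The fix is exactly the trade used for $\mathfrak g_{\mathcal L}$: write the Gaussian as $(s/|w^{-1}v|^2)^{Q/2+N/2+\gamma/2}$, use $|w^{-1}v|\ge|w^{-1}u|/2$, and combine $(\sqrt{s}/|w^{-1}u|)^{N}$ with $[1+\sqrt{s}/\rho(w)]^{-N}$ via $2\sqrt{s}/|w^{-1}u|\le(2\sqrt{s}+\rho(w))/(|w^{-1}u|+\rho(w))$ to produce $[1+|w^{-1}u|/\rho(w)]^{-N}$. Second, you aim for decay in terms of $\rho(u)$ by going through the cone variable $v$; note that in the large-$s$ regime $\sqrt{s}$ is \emph{not} controlled by $|u^{-1}w|$ (it is only bounded below there), so the passage from $\rho(v)$ to $\rho(u)$ via Lemma \ref{N0} costs an $s$-dependent factor and, after a short computation, degrades the exponent from $N$ to $N/(N_0+1)$ --- harmless since $N$ is at your disposal, but not the direct comparability you assert. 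The paper sidesteps this entirely by attaching the factor $[1+\sqrt{s}/\rho(w)]^{-N}$ to the fixed integration variable $w$, so that no conversion inside the cone is needed, and then uses \eqref{com2} to pass from $\rho(w)$ to $\rho(u_0)$ at the dyadic stage. With these two adjustments your argument coincides with the paper's proof.
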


\begin{thm}\label{mainthm:4}
Let $\rho$ be as in \eqref{rho}. Let $p=1$ and $0<\kappa<1$. If $V\in RH_q$ with $q\geq Q/2$, then the operator $\mathcal{S}_{\mathcal L}$ is bounded from $L^{1,\kappa}_{\rho,\infty}(\mathbb H^n)$ to $WL^{1,\kappa}_{\rho,\infty}(\mathbb H^n)$.
\end{thm}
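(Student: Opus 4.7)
The plan is to follow the blueprint of the proof of Theorem \ref{mainthm:2}, with the Lusin area integral replacing the Littlewood-Paley function. Fix a ball $B=B(u_0,r)\subset\mathbb H^n$, and suppose $f\in L^{1,\kappa}_{\rho,\theta^*}(\mathbb H^n)$ with $\theta^*$ as in the proof of Theorem \ref{mainthm:2}. By the definition of the weak Morrey space, it is enough to exhibit some $\vartheta>0$ (depending on $\theta^*$ and on a parameter $N$ to be chosen later) such that
\begin{equation*}
\frac{1}{|B|^{\kappa}}\sup_{\lambda>0}\lambda\cdot\big|\big\{u\in B:\mathcal{S}_{\mathcal L}(f)(u)>\lambda\big\}\big|\lesssim\left[1+\frac{r}{\rho(u_0)}\right]^{\vartheta}.
\end{equation*}
Decompose $f=f_1+f_2$ with $f_1=f\cdot\chi_{2B}$ and $f_2=f\cdot\chi_{(2B)^\complement}$, and split the left-hand side into the corresponding pieces $J_1$ and $J_2$. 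The term $J_1$ is dispatched by the weak-type $(1,1)$ bound of Theorem \ref{strongs}(2) combined with the Morrey condition on $2B$ and the estimate \eqref{2rx}, producing a bound of $C\|f\|_{L^{1,\kappa}_{\rho,\infty}(\mathbb H^n)}[1+r/\rho(u_0)]^{\theta^*}$, exactly as in the proof of Theorem \ref{mainthm:2}.

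The real work is on $J_2$, and the heart of the argument is to prove the pointwise estimate
\begin{equation*}
\mathcal{S}_{\mathcal L}(f_2)(u)\lesssim\int_{(2B)^\complement}\left[1+\frac{|w^{-1}u|}{\rho(u)}\right]^{-N}\frac{|f(w)|}{|w^{-1}u|^{Q}}\,dw,\qquad u\in B(u_0,r),
\end{equation*}
which is the Lusin area analogue of \eqref{keyWH}. I would substitute the kernel bound from Lemma \ref{kernel} into \eqref{kauvw} and split the cone integral at the natural threshold $s\asymp|w^{-1}u|^2$. For $v\in B(u,\sqrt{s})$ and $w\in(2B)^\complement$, the triangle inequality gives $|w^{-1}v|\approx|w^{-1}u|$ whenever $\sqrt{s}\lesssim|w^{-1}u|$. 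On the large-$s$ piece ($s\geq|w^{-1}u|^2$), I would use Minkowski's integral inequality in the $w$-variable, bound the exponential by $1$, and note that the measure of the section $\{v:|u^{-1}v|<\sqrt{s}\}$ contributes $s^{Q/2}$ which combines with the weight $s^{-Q/2-1}$ to give a Lebesgue integrable factor $s^{-1}$; the $[1+\sqrt{s}/\rho(u)]^{-N}$ factor together with the range $s\geq|w^{-1}u|^2$ yields the desired bound $[1+|w^{-1}u|/\rho(u)]^{-N}|w^{-1}u|^{-Q}$. On the small-$s$ piece ($s<|w^{-1}u|^2$), I would trade the Gaussian factor for the polynomial decay $(|w^{-1}v|^2/s)^{-(Q/2+N/2+\gamma/2)}$ (for a fixed small $\gamma>0$), apply the inequality $\sqrt{s\,}/|w^{-1}v|\leq(\sqrt{s\,}+\rho(u))/(|w^{-1}v|+\rho(u))$ as in the $\mathfrak{g}_{\mathcal L}$ argument, use $|w^{-1}v|\approx|w^{-1}u|$, and then Minkowski again in $w$; the leftover $s$-integral is of the form $\int_0^{|w^{-1}u|^2}(s/|w^{-1}u|^2)^{\gamma}\,ds/s$, which converges.

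Once this pointwise bound is in hand, the remainder of the argument is a verbatim copy of the corresponding portion of the proof of Theorem \ref{mainthm:2}: apply Chebyshev's inequality to $J_2$, perform the annular decomposition over $\{2^k r\leq|w^{-1}u_0|<2^{k+1}r\}$ using $|w^{-1}u|\approx|w^{-1}u_0|$ for $u\in B$, apply the Morrey condition on each ball $B(u_0,2^{k+1}r)$, invoke \eqref{com2} and \eqref{2rx} to replace $\rho(u)$ by $\rho(u_0)$ at the cost of a factor $[1+r/\rho(u_0)]^{N\cdot N_0/(N_0+1)}$, and finally choose $N\geq\theta^*$ large enough so that the geometric series $\sum_{k\geq1}(|B|/|B(u_0,2^{k+1}r)|)^{1-\kappa}$ (which converges since $0<\kappa<1$) absorbs the remaining $k$-sum. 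Setting $\vartheta=\max\{\theta^*,N\cdot N_0/(N_0+1)\}$ and taking the supremum over $\lambda>0$ closes the argument.

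The principal obstacle is the pointwise estimate for $\mathcal{S}_{\mathcal L}(f_2)$: the two nested integrals (over the cone $\Gamma(u)$ and over $w\in(2B)^\complement$) must be reorganized carefully via Minkowski and the correct splitting of the $s$-variable, so that the bound produced has precisely the same form as \eqref{keyWH}. Once past that step, the rest is purely mechanical repetition of the proof of Theorem \ref{mainthm:2}.
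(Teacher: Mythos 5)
Your proposal follows the same route as the paper: the paper's own proof of this theorem is precisely ``repeat the proof of Theorem \ref{mainthm:2}, using the weak $(1,1)$ bound of Theorem \ref{strongs}(2) for the local part and the pointwise estimate \eqref{Talpha22} (established in the proof of Theorem \ref{mainthm:3}) for the far part,'' and that is exactly what you reconstruct, the choice of $2B$ versus the paper's $4B$ being immaterial. One detail to repair in your pointwise estimate: Lemma \ref{kernel} applied to $\mathcal Q_s(v,w)$ yields the factor $\bigl[1+\sqrt{s}/\rho(v)+\sqrt{s}/\rho(w)\bigr]^{-N}$, so the bracket you can keep for free is $\bigl[1+|w^{-1}u|/\rho(w)\bigr]^{-N}$ (as in \eqref{keyWH2}), not $\bigl[1+|w^{-1}u|/\rho(u)\bigr]^{-N}$; your version is still reachable by comparing $\rho(v)$ with $\rho(u)$ via Lemma \ref{N0} for $v\in B(u,\sqrt{s\,})$, at the harmless cost of degrading $N$ to $N/(N_0+1)$, and either variant feeds into \eqref{com2} in the annular sum. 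Also, in the large-$s$ regime the squared kernel contributes $s^{-Q}$, so the $s$-integrand after accounting for the section measure is $s^{-Q-1}$ rather than $s^{-1}$; it is the tail integral $\int_{|w^{-1}u|^2}^{\infty}s^{-Q-1}\,ds\approx|w^{-1}u|^{-2Q}$ that produces, after the square root, the factor $|w^{-1}u|^{-Q}$ you correctly assert.
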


\begin{proof}[Proof of Theorem $\ref{mainthm:3}$]
For any given $f\in L^{p,\kappa}_{\rho,\infty}(\mathbb H^n)$ with $1\leq p<\infty$ and $0<\kappa<1$, suppose that $f\in L^{p,\kappa}_{\rho,\theta^*}(\mathbb H^n)$ for some $\theta^*>0$, where
\begin{equation*}
\theta^*=\inf\big\{\theta>0:f\in L^{p,\kappa}_{\rho,\theta}(\mathbb H^n)\big\}\quad\mathrm{and}\quad\big\|f\big\|_{L^{p,\kappa}_{\rho,\infty}(\mathbb H^n)}=\big\|f\big\|_{L^{p,\kappa}_{\rho,\theta^*}(\mathbb H^n)}.
\end{equation*}
By definition, we only need to show that for any given ball $B=B(u_0,r)$ of $\mathbb H^n$, there is some $\Theta>0$ such that
\begin{equation}\label{Main3}
\bigg(\frac{1}{|B(u_0,r)|^{\kappa}}\int_{B(u_0,r)}\big|\mathcal{S}_{\mathcal L}(f)(u)\big|^p\,du\bigg)^{1/p}\lesssim\left[1+\frac{r}{\rho(u_0)}\right]^{\Theta}
\end{equation}
holds true for given $f\in L^{p,\kappa}_{\rho,\theta^*}(\mathbb H^n)$ with $(p,\kappa)\in(1,\infty)\times(0,1)$. Using the standard technique, we decompose the function $f$ as
\begin{equation*}
\begin{cases}
f=f_1+f_2\in L^{p,\kappa}_{\rho,\theta^*}(\mathbb H^n);\  &\\
f_1=f\cdot\chi_{4B};\  &\\
f_2=f\cdot\chi_{(4B)^{\complement}},
\end{cases}
\end{equation*}
where $4B$ is the ball centered at $u_0$ of radius $4r>0$ and $(4B)^{\complement}=\mathbb H^n\backslash(4B)$. Then by the sublinearity of $\mathcal{S}_{\mathcal L}$, we write
\begin{equation*}
\begin{split}
\bigg(\frac{1}{|B(u_0,r)|^{\kappa}}\int_{B(u_0,r)}\big|\mathcal{S}_{\mathcal L}(f)(u)\big|^p\,du\bigg)^{1/p}
&\leq\bigg(\frac{1}{|B(u_0,r)|^{\kappa}}\int_{B(u_0,r)}\big|\mathcal{S}_{\mathcal L}(f_1)(u)\big|^p\,du\bigg)^{1/p}\\
&+\bigg(\frac{1}{|B(u_0,r)|^{\kappa}}\int_{B(u_0,r)}\big|\mathcal{S}_{\mathcal L}(f_2)(u)\big|^p\,du\bigg)^{1/p}\\
&:=I'_1+I'_2.
\end{split}
\end{equation*}
Let us estimate the first term $I'_1$. By Theorem \ref{strongs} (1) and \eqref{2rx}, we get
\begin{equation*}
\begin{split}
I'_1&\leq C\cdot\frac{1}{|B|^{\kappa/p}}\bigg(\int_{4B}\big|f(u)\big|^p\,du\bigg)^{1/p}\\
&\leq C\big\|f\big\|_{L^{p,\kappa}_{\rho,\theta^*}(\mathbb H^n)}\cdot
\frac{|4B|^{\kappa/p}}{|B|^{\kappa/p}}\left[1+\frac{4r}{\rho(u_0)}\right]^{\theta^*}\\
&\leq C_{\theta^*,n}\big\|f\big\|_{L^{p,\kappa}_{\rho,\infty}(\mathbb H^n)}\left[1+\frac{r}{\rho(u_0)}\right]^{\theta^*}.
\end{split}
\end{equation*}
We now estimate the second term $I'_2$. We first claim that the following inequality
\begin{equation}\label{keyWH2}
\mathcal{S}_{\mathcal L}(f_2)(u)\leq C_N\int_{(4B)^{\complement}}\bigg[1+\frac{|w^{-1}u|}{\rho(w)}\bigg]^{-N}\frac{1}{|w^{-1}u|^{Q}}\cdot|f(w)|\,dw
\end{equation}
holds for any $u\in B(u_0,r)$. Arguing as in the proof of Theorem \ref{mainthm:1}, two cases are considered below: $s>\frac{|w^{-1}u|^2}{4}$ and $0\leq s\leq\frac{|w^{-1}u|^2}{4}$. From \eqref{kauvw} and Lemma \ref{kernel}, it follows that
\begin{equation*}
\begin{split}
&\mathcal{S}_{\mathcal L}(f_2)(u)=\bigg(\iint_{\Gamma(u)}\bigg|\int_{\mathbb H^n}\mathcal Q_s(v,w)f_2(w)\,dw\bigg|^2\frac{dvds}{s^{Q/2+1}}\bigg)^{1/2}\\
&\leq\bigg(\int_0^{\infty}\int_{|u^{-1}v|<\sqrt{s}}\bigg|\int_{\mathbb H^n}\frac{C_N}{s^{Q/2}}\cdot\exp\bigg(-\frac{|w^{-1}v|^2}{As}\bigg)
\bigg[1+\frac{\sqrt{s}}{\rho(w)}\bigg]^{-N}|f_2(w)|\,dw\bigg|^2\frac{dvds}{s^{Q/2+1}}\bigg)^{1/2}\\
&=\bigg(\int_{\frac{|w^{-1}u|^2}{4}}^{\infty}\int_{|u^{-1}v|<\sqrt{s}}\bigg|\int_{\mathbb H^n}\frac{C_N}{s^{Q/2}}\cdot\exp\bigg(-\frac{|w^{-1}v|^2}{As}\bigg)
\bigg[1+\frac{\sqrt{s}}{\rho(w)}\bigg]^{-N}|f_2(w)|\,dw\bigg|^2\frac{dvds}{s^{Q/2+1}}\bigg)^{1/2}\\
&+\bigg(\int_0^{\frac{|w^{-1}u|^2}{4}}\int_{|u^{-1}v|<\sqrt{s}}\bigg|\int_{\mathbb H^n}\frac{C_N}{s^{Q/2}}\cdot\exp\bigg(-\frac{|w^{-1}v|^2}{As}\bigg)
\bigg[1+\frac{\sqrt{s}}{\rho(w)}\bigg]^{-N}|f_2(w)|\,dw\bigg|^2\frac{dvds}{s^{Q/2+1}}\bigg)^{1/2}\\
&:=\mathcal{S}_{\mathcal L}^{\infty}(f_2)(u)+\mathcal{S}_{\mathcal L}^0(f_2)(u).
\end{split}
\end{equation*}
When $s>\frac{|w^{-1}u|^2}{4}$, then $\sqrt{s\,}>\frac{|w^{-1}u|}{2}$, and hence
\begin{equation*}
\begin{split}
\mathcal{S}_{\mathcal L}^{\infty}(f_2)(u)&\leq\bigg(\int_{\frac{|w^{-1}u|^2}{4}}^{\infty}\int_{|u^{-1}v|<\sqrt{s}}\bigg|\int_{\mathbb H^n}\frac{C_N}{s^{Q/2}}\cdot
\bigg[1+\frac{\sqrt{s}}{\rho(w)}\bigg]^{-N}|f_2(w)|\,dw\bigg|^2\frac{dvds}{s^{Q/2+1}}\bigg)^{1/2}\\
&\leq\bigg(\int_{\frac{|w^{-1}u|^2}{4}}^{\infty}\int_{|u^{-1}v|<\sqrt{s}}\bigg|\int_{\mathbb H^n}\frac{C_N}{s^{Q/2}}\cdot
\bigg[1+\frac{|w^{-1}u|}{\rho(w)}\bigg]^{-N}|f_2(w)|\,dw\bigg|^2\frac{dvds}{s^{Q/2+1}}\bigg)^{1/2}.
\end{split}
\end{equation*}
By Minkowski's inequality, a straightforward computation yields that
\begin{equation*}
\begin{split}
\mathcal{S}_{\mathcal L}^\infty(f_2)(u)&\leq C_N\int_{\mathbb H^n}
\bigg[1+\frac{|w^{-1}u|}{\rho(w)}\bigg]^{-N}|f_2(w)|\bigg(\int_{\frac{|w^{-1}u|^2}{4}}^{\infty}\int_{|u^{-1}v|<\sqrt{s}}\frac{dvds}{s^{Q+Q/2+1}}\bigg)^{1/2}dw\\
&\leq C_{N,n}\int_{\mathbb H^n}\bigg[1+\frac{|w^{-1}u|}{\rho(w)}\bigg]^{-N}|f_2(w)|\bigg(\int_{\frac{|w^{-1}u|^2}{4}}^{\infty}\frac{ds}{s^{Q+1}}\bigg)^{1/2}dw\\
&\leq C_{N,n}\int_{(4B)^{\complement}}\bigg[1+\frac{|w^{-1}u|}{\rho(w)}\bigg]^{-N}\frac{1}{|w^{-1}u|^{Q}}\cdot|f(w)|\,dw.
\end{split}
\end{equation*}
On the other hand, it is easy to see that
\begin{equation*}
\begin{split}
&\int_{\mathbb H^n}\frac{C_N}{s^{Q/2}}\cdot\exp\bigg(-\frac{|w^{-1}v|^2}{As}\bigg)\bigg[1+\frac{\sqrt{s}}{\rho(w)}\bigg]^{-N}|f_2(w)|\,dw\\
&\leq\int_{\mathbb H^n}\frac{C_{N,A}}{s^{Q/2}}\cdot
\bigg(\frac{|w^{-1}v|^2}{s}\bigg)^{-(Q/2+N/2+\gamma/2)}\bigg[1+\frac{\sqrt{s}}{\rho(w)}\bigg]^{-N}|f_2(w)|\,dw\\
&=\int_{\mathbb H^n}\frac{C_{N,A}}{|w^{-1}v|^{Q}}\cdot
\bigg(\frac{s}{|w^{-1}v|^2}\bigg)^{\gamma/2}\bigg(\frac{\sqrt{s}}{|w^{-1}v|}\bigg)^{N}\bigg[1+\frac{\sqrt{s}}{\rho(w)}\bigg]^{-N}|f_2(w)|\,dw,
\end{split}
\end{equation*}
where $\gamma$ is a positive number. Note that when $0\leq s\leq\frac{|w^{-1}u|^2}{4}$ and $|u^{-1}v|<\sqrt{s}$, then $|w^{-1}v|\geq\frac{|w^{-1}u|}{2}$. Indeed, by triangle inequality,
\begin{equation}\label{wang1}
|w^{-1}v|\geq|w^{-1}u|-|u^{-1}v|\geq|w^{-1}u|-\frac{|w^{-1}u|}{2}=\frac{|w^{-1}u|}{2}.
\end{equation}
In addition, it is easy to check that when $0\leq s\leq\frac{|w^{-1}u|^2}{4}$,
\begin{equation}\label{wang2}
\frac{2\sqrt{s\,}}{|w^{-1}u|}\leq\frac{2\sqrt{s\,}+\rho(w)}{|w^{-1}u|+\rho(w)}.
\end{equation}
Hence, in view of \eqref{wang1} and \eqref{wang2}, we have
\begin{equation*}
\begin{split}
&\int_{\mathbb H^n}\frac{C_N}{s^{Q/2}}\cdot\exp\bigg(-\frac{|w^{-1}v|^2}{As}\bigg)\bigg[1+\frac{\sqrt{s}}{\rho(w)}\bigg]^{-N}|f_2(w)|\,dw\\
&\leq\int_{\mathbb H^n}\frac{C_N}{|w^{-1}u|^{Q}}\cdot
\bigg(\frac{s}{|w^{-1}u|^2}\bigg)^{\gamma/2}\bigg[\frac{\sqrt{s}+\rho(w)}{|w^{-1}u|+\rho(w)}\bigg]^{N}
\bigg[\frac{\sqrt{s}+\rho(w)}{\rho(w)}\bigg]^{-N}|f_2(w)|\,dw\\
&=\int_{\mathbb H^n}\frac{C_N}{|w^{-1}u|^{Q}}\cdot
\bigg(\frac{s}{|w^{-1}u|^2}\bigg)^{\gamma/2}\bigg[1+\frac{|w^{-1}u|}{\rho(w)}\bigg]^{-N}|f_2(w)|\,dw.
\end{split}
\end{equation*}
Therefore, applying this estimate along with Minkowski's inequality, we get
\begin{equation*}
\begin{split}
&\mathcal{S}_{\mathcal L}^0(f_2)(u)\\&\leq\bigg(\int_0^{\frac{|w^{-1}u|^2}{4}}\int_{|u^{-1}v|<\sqrt{s}}\bigg|\int_{\mathbb H^n}\frac{C_N}{|w^{-1}u|^{Q}}\cdot
\bigg(\frac{s}{|w^{-1}u|^2}\bigg)^{\gamma/2}\bigg[1+\frac{|w^{-1}u|}{\rho(w)}\bigg]^{-N}|f_2(w)|\,dw\bigg|^2\frac{dvds}{s^{Q/2+1}}\bigg)^{1/2}\\
&\leq\int_{\mathbb H^n}\frac{C_N}{|w^{-1}u|^{Q}}\cdot
\bigg[1+\frac{|w^{-1}u|}{\rho(w)}\bigg]^{-N}|f_2(w)|
\bigg(\int_0^{\frac{|w^{-1}u|^2}{4}}\int_{|u^{-1}v|<\sqrt{s}}\bigg(\frac{s}{|w^{-1}u|^2}\bigg)^{\gamma}\frac{dvds}{s^{Q/2+1}}\bigg)^{1/2}dw\\
\end{split}
\end{equation*}
\begin{equation*}
\begin{split}
&\leq\int_{\mathbb H^n}\frac{C_{N,n}}{|w^{-1}u|^{Q}}\cdot
\bigg[1+\frac{|w^{-1}u|}{\rho(w)}\bigg]^{-N}|f_2(w)|\bigg(\int_0^{\frac{|w^{-1}u|^2}{4}}\bigg(\frac{s}{|w^{-1}u|^2}\bigg)^{\gamma}\frac{ds}{s}\bigg)^{1/2}dw\\
&\leq C_{N,n,\gamma}\int_{(4B)^{\complement}}\bigg[1+\frac{|w^{-1}u|}{\rho(w)}\bigg]^{-N}\frac{1}{|w^{-1}u|^{Q}}\cdot|f(w)|\,dw.
\end{split}
\end{equation*}
Combining the above two estimates produces the desired inequality \eqref{keyWH2} for any $u\in B(u_0,r)$. Routine arguments as in the proof of Theorem \ref{mainthm:1} show that $|w^{-1}u|\approx|w^{-1}u_0|$ whenever $u\in  B(u_0,r)$ and $w\in (4B)^{\complement}$. This fact, along with \eqref{keyWH2} and \eqref{2rx}, implies that
\begin{equation*}
\begin{split}
\big|\mathcal{S}_{\mathcal L}(f_2)(u)\big|
&\leq C\int_{(4B)^{\complement}}\bigg[1+\frac{|w^{-1}u_0|}{\rho(w)}\bigg]^{-N}\frac{1}{|w^{-1}u_0|^{Q}}\cdot|f(w)|\,dw\\
&=C\sum_{k=2}^\infty\int_{2^kr\leq|w^{-1}u_0|<2^{k+1}r}\bigg[1+\frac{|w^{-1}u_0|}{\rho(w)}\bigg]^{-N}
\frac{1}{|w^{-1}u_0|^{Q}}\cdot|f(w)|\,dw\\
&\leq C\sum_{k=2}^\infty\frac{1}{|B(u_0,2^{k+1}r)|}
\int_{|w^{-1}u_0|<2^{k+1}r}\bigg[1+\frac{2^{k+1}r}{\rho(w)}\bigg]^{-N}|f(w)|\,dw.
\end{split}
\end{equation*}
Moreover, in view of \eqref{com2}, we obtain
\begin{align}\label{Talpha22}
\big|\mathcal{S}_{\mathcal L}(f_2)(u)\big|
&\leq C\sum_{k=2}^\infty\frac{1}{|B(u_0,2^{k+1}r)|}\notag\\
&\times\int_{B(u_0,2^{k+1}r)}\left[1+\frac{r}{\rho(u_0)}\right]^{N\cdot\frac{N_0}{N_0+1}}
\left[1+\frac{2^{k+1}r}{\rho(u_0)}\right]^{-N}|f(w)|\,dw.
\end{align}
We then follow the same arguments as in the proof of Theorem \ref{mainthm:1} to complete the proof.
\end{proof}

\begin{proof}[Proof of Theorem $\ref{mainthm:4}$]
By using the weak-type $(1,1)$ of $\mathcal{S}_{\mathcal L}$(Theorem \ref{strongs} (2)) and the pointwise estimate \eqref{Talpha22}, and following the proof of Theorem \ref{mainthm:2} line by line, we are able to prove the conclusion of Theorem \ref{mainthm:4}. The details are omitted here.
\end{proof}

Finally, we also consider the Lusin area integral with respect to the Poisson semigroup $\big\{e^{-s\sqrt{\mathcal L}}\big\}_{s>0}$, which is defined by
\begin{equation*}
\begin{split}
\mathcal{S}_{\sqrt{\mathcal L}}(f)(u)&=\bigg(\iint_{\Gamma(u)}\Big|s\cdot\frac{d}{ds}e^{-s\sqrt{\mathcal L}}f(v)\Big|^2\frac{dvds}{s^{Q/2+1}}\bigg)^{1/2}\\
&=\bigg(\iint_{\Gamma(u)}\big|(s\sqrt{\mathcal L})e^{-s\sqrt{\mathcal L}}f(v)\big|^2\frac{dvds}{s^{Q/2+1}}\bigg)^{1/2}.
\end{split}
\end{equation*}
As before, by the subordination formula \eqref{subor}, it is not difficult to check that this integral operator $\mathcal{S}_{\sqrt{\mathcal L}}$ is dominated by $\mathcal{S}_{\mathcal L}$ in some sense. As an immediate consequence of Theorems \ref{mainthm:3} and \ref{mainthm:4}, we have the following results.
\begin{thm}\label{cor:7}
Let $\rho$ be as in \eqref{rho}. Let $1<p<\infty$ and $0<\kappa<1$. If $V\in RH_q$ with $q\geq Q/2$, then the operator $\mathcal{S}_{\sqrt{\mathcal L}}$ is bounded on $L^{p,\kappa}_{\rho,\infty}(\mathbb H^n)$.
\end{thm}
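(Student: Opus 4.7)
The plan is to reduce the theorem to Theorem \ref{mainthm:3} via the subordination formula \eqref{subor}, in the same spirit as the passage from Theorem \ref{mainthm:1} to Theorem \ref{cor:3}. The new difficulty, absent in the $\mathfrak{g}$-function case, is that there is no clean pointwise bound $\mathcal{S}_{\sqrt{\mathcal L}}(f)(u)\leq C\,\mathcal{S}_{\mathcal L}(f)(u)$; the reduction must therefore proceed through a Poisson-type kernel estimate rather than through a direct comparison of the two area integrals.

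Concretely, I would first derive an analog of Lemma \ref{kernel} for the Poisson semigroup. Writing $\widetilde{\mathcal Q}_s(y,w)$ for the integral kernel of $(s\sqrt{\mathcal L})e^{-s\sqrt{\mathcal L}}$, differentiation of \eqref{subor} in $s$ gives
\begin{equation*}
\widetilde{\mathcal Q}_s(y,w) = \frac{2}{\sqrt{\pi}}\int_0^{\infty}\frac{e^{-\tau}}{\sqrt{\tau}}\,\mathcal Q_{s^2/(4\tau)}(y,w)\,d\tau.
\end{equation*}
Inserting the heat-type bound of Lemma \ref{kernel} and splitting the $\tau$-integral at the threshold $\tau\approx s^2/|w^{-1}y|^2$, where the Gaussian exponential begins to dominate the polynomial subordination weight, should yield, for every $N\in\mathbb N$, a Poisson-type bound of the form
\begin{equation*}
\big|\widetilde{\mathcal Q}_s(y,w)\big| \leq \frac{C_N\,s}{\big(s+|w^{-1}y|\big)^{Q+1}}\bigg[1+\frac{s+|w^{-1}y|}{\rho(y)}+\frac{s+|w^{-1}y|}{\rho(w)}\bigg]^{-N}.
\end{equation*}

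With this kernel estimate in hand, the proof of Theorem \ref{mainthm:3} can be transcribed almost verbatim. Split $f=f\chi_{4B}+f\chi_{(4B)^{\complement}}=:f_1+f_2$. The local piece $\mathcal{S}_{\sqrt{\mathcal L}}(f_1)$ is controlled by the $L^p(\mathbb H^n)$ boundedness of $\mathcal{S}_{\sqrt{\mathcal L}}$, which follows from Theorem \ref{strongs} together with a straightforward Minkowski-inequality argument in $\tau$ applied to \eqref{subor}. For the nonlocal piece, splitting the double integral defining $\mathcal{S}_{\sqrt{\mathcal L}}(f_2)(u)$ into the two regions $s>|w^{-1}u|^2/4$ and $0\leq s\leq|w^{-1}u|^2/4$, and applying the Poisson kernel estimate in each region (with the same polynomial-versus-exponential tradeoff that was used to establish \eqref{keyWH2}), yields the very same pointwise majorant
\begin{equation*}
\mathcal{S}_{\sqrt{\mathcal L}}(f_2)(u) \lesssim \int_{(4B)^{\complement}}\bigg[1+\frac{|w^{-1}u|}{\rho(w)}\bigg]^{-N}\frac{|f(w)|}{|w^{-1}u|^{Q}}\,dw.
\end{equation*}
From here the dyadic-annulus decomposition of $(4B)^{\complement}$, the critical-radius comparison \eqref{com2}, H\"older's inequality, and the geometric-series summation in $k$ all carry over verbatim and deliver the required Morrey-type estimate \eqref{Main3} with $\mathcal{S}_{\mathcal L}$ replaced by $\mathcal{S}_{\sqrt{\mathcal L}}$.

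The main technical obstacle is the derivation of the Poisson kernel estimate: the $\tau$-splitting has to be tuned so that both the Gaussian factor $\exp(-4\tau|w^{-1}y|^2/(As^2))$ and the critical-radius factor $[1+\sqrt{s^2/(4\tau)}/\rho]^{-N}$ survive integration against $\tau^{-1/2}e^{-\tau}d\tau$ with the right polynomial decay in $s+|w^{-1}y|$. Once that bookkeeping is settled, no further new ideas are required and the conclusion of Theorem \ref{cor:7} falls out of the proof of Theorem \ref{mainthm:3}.
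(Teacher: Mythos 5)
Your proposal is correct in substance, but it takes a genuinely different (and more careful) route than the paper, which at this point offers essentially no argument: it merely asserts that, ``by the subordination formula, $\mathcal{S}_{\sqrt{\mathcal L}}$ is dominated by $\mathcal{S}_{\mathcal L}$ in some sense'' and declares Theorem \ref{cor:7} an immediate consequence of Theorem \ref{mainthm:3}. You correctly identify why the clean pointwise domination used for $\mathfrak{g}_{\sqrt{\mathcal L}}\leq\sqrt2\,\mathfrak{g}_{\mathcal L}$ does not transfer: after Minkowski in the subordination variable and the substitution $t=s^2/(4\tau)$, the parabolic cone $|u^{-1}v|<\sqrt{s}$ becomes a region of the form $|u^{-1}v|<c\,(\tau t)^{1/4}$, which is not a dilate of $\Gamma(u)$, so no pointwise comparison of the two area integrals drops out. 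Your substitute --- derive the Poisson-type kernel bound
\begin{equation*}
\big|\widetilde{\mathcal Q}_s(y,w)\big|\lesssim_N\frac{s}{(s+|w^{-1}y|)^{Q+1}}\Big[1+\frac{s+|w^{-1}y|}{\rho(y)}+\frac{s+|w^{-1}y|}{\rho(w)}\Big]^{-N}
\end{equation*}
from Lemma \ref{kernel} and the subordination identity, then rerun the proof of Theorem \ref{mainthm:3} --- is sound; the $\tau$-splitting you describe does deliver this bound (for small $\tau$ one has $s/(2\sqrt{\tau})\geq(s+|w^{-1}y|)/2$ so the critical-radius factor survives directly, and for large $\tau$ the product $e^{-\tau}\exp(-c\tau|w^{-1}y|^2/s^2)$ absorbs the polynomial loss incurred in converting $[1+s/(2\sqrt{\tau}\rho)]^{-N}$ into $[1+(s+|w^{-1}y|)/\rho]^{-N}$). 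With that kernel estimate the nonlocal majorant and the dyadic/\eqref{com2}/H\"older machinery of Section \ref{sec4} indeed carry over verbatim. What your route buys is an actual proof where the paper has only a gesture. One internal inconsistency to repair: for the local piece you invoke the $L^p$-boundedness of $\mathcal{S}_{\sqrt{\mathcal L}}$ via ``a straightforward Minkowski-inequality argument in $\tau$'' applied to Theorem \ref{strongs}, but that is exactly the cone-distortion reduction you ruled out two paragraphs earlier. Either cite the $L^p(\mathbb H^n)$ boundedness of $\mathcal{S}_{\sqrt{\mathcal L}}$ directly (it is proved in \cite{lin} alongside Theorem \ref{strongs}), or deduce it from your Poisson kernel estimate by standard square-function arguments; with that substitution your proof is complete.
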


\begin{thm}\label{cor:8}
Let $\rho$ be as in \eqref{rho}. Let $p=1$ and $0<\kappa<1$. If $V\in RH_q$ with $q\geq Q/2$, then the operator $\mathcal{S}_{\sqrt{\mathcal L}}$ is bounded from $L^{1,\kappa}_{\rho,\infty}(\mathbb H^n)$ to $WL^{1,\kappa}_{\rho,\infty}(\mathbb H^n)$.
\end{thm}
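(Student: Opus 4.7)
The plan is to mirror the proof of Theorem \ref{mainthm:4} with $\mathcal{S}_{\mathcal L}$ replaced by $\mathcal{S}_{\sqrt{\mathcal L}}$, obtaining the Poisson-semigroup analog through the subordination formula \eqref{subor}. Two ingredients are needed: the weak-$(1,1)$ bound for $\mathcal{S}_{\sqrt{\mathcal L}}$, which is the Poisson-semigroup counterpart of Theorem \ref{strongs}(2); and an off-diagonal pointwise estimate for the kernel $\widetilde{\mathcal Q}_s(u,v)$ of the operator $s\sqrt{\mathcal L}\,e^{-s\sqrt{\mathcal L}}$, analogous to Lemma \ref{kernel}.

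Differentiating \eqref{subor} in $s$ gives the kernel representation
\begin{equation*}
\widetilde{\mathcal Q}_s(u,v) = \frac{2}{\sqrt{\pi}}\int_0^{\infty}\frac{e^{-w}}{\sqrt{w}}\cdot\mathcal Q_{s^2/(4w)}(u,v)\,dw,
\end{equation*}
so that inserting \eqref{WH1} and evaluating the elementary Gamma integral $\int_0^{\infty}e^{-\beta w}w^{(Q-1)/2}\,dw = \Gamma((Q+1)/2)\beta^{-(Q+1)/2}$, with $\beta = 1 + 4|v^{-1}u|^2/(As^2)$, yields a Poisson-type bound
\begin{equation*}
\big|\widetilde{\mathcal Q}_s(u,v)\big| \lesssim \frac{s}{(s^2+|v^{-1}u|^2)^{(Q+1)/2}}\bigg[1+\frac{s}{\rho(u)}+\frac{s}{\rho(v)}\bigg]^{-N},\quad s>0,
\end{equation*}
for each $N\in\mathbb N$. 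The potential-dependent decay factor survives because the subordination weight $e^{-w}w^{-1/2}$ is concentrated near $w\lesssim 1$, on which region $s/(2\sqrt{w}\rho)\gtrsim s/\rho$; the large-$w$ contribution is absorbed by the rapidly decaying $e^{-w}$.

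With this Poisson-kernel bound in hand, I would repeat the proof of Theorem \ref{mainthm:4} essentially verbatim: decompose $f = f\chi_{4B}+f\chi_{(4B)^{\complement}}$, estimate the local term via the weak-$(1,1)$ bound, and derive an analog of the pointwise estimate \eqref{keyWH2} for $\mathcal{S}_{\sqrt{\mathcal L}}(f_2)$ by splitting the inner $s$-integral at $s=|v^{-1}u|$ and applying Minkowski's inequality on each piece. The Chebyshev/geometric-series argument culminating in \eqref{Tf2pr}, together with \eqref{com2} and \eqref{2rx}, then yields \eqref{Main2} exactly as before. The main obstacle is reproducing the pointwise estimate for $\mathcal{S}_{\sqrt{\mathcal L}}(f_2)$: whereas the Gaussian decay in Lemma \ref{kernel} combines effortlessly with the $dv\,ds/s^{Q/2+1}$ measure on the cone, the Poisson polynomial decay $s/(s^2+|v^{-1}u|^2)^{(Q+1)/2}$ requires slightly more careful bookkeeping to extract the factor $1/|v^{-1}u|^Q$ needed to feed into the geometric series over dyadic annuli $2^kr\leq|w^{-1}u_0|<2^{k+1}r$. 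Once this pointwise estimate is established, the remainder of the argument is identical to that of Theorem \ref{mainthm:4}, completing the proof.
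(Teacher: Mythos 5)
Your route is genuinely different from the paper's, and it has a gap. The paper obtains Theorem \ref{cor:8} as an immediate corollary of Theorem \ref{mainthm:4}: one applies Minkowski's inequality to the subordination formula \eqref{subor} \emph{before} taking any kernel estimates, so that $\mathcal{S}_{\sqrt{\mathcal L}}(f)$ is dominated by an average over $w$ of heat-semigroup square functions (exactly the computation carried out explicitly for $\mathfrak{g}_{\sqrt{\mathcal L}}$ at the end of Section \ref{sec3}), and then the already-proved Morrey bound for $\mathcal{S}_{\mathcal L}$ is invoked. No Poisson kernel estimate and no separate weak-$(1,1)$ bound for $\mathcal{S}_{\sqrt{\mathcal L}}$ are needed. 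You instead integrate out the subordination variable first, obtaining the pointwise bound $|\widetilde{\mathcal Q}_s(u,v)|\lesssim s(s^2+|v^{-1}u|^2)^{-(Q+1)/2}[1+s/\rho(u)+s/\rho(v)]^{-N}$ (which is correct), and then propose to rerun the proof of Theorem \ref{mainthm:4} with this bound. That is where the argument breaks, and the "careful bookkeeping" you defer is not bookkeeping but the crux.

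Two concrete obstructions. First, the Poisson bound is inhomogeneous with respect to the parabolic cone $|u^{-1}v|<\sqrt{s}$ and the measure $dv\,ds/s^{Q/2+1}$ appearing in $\mathcal{S}_{\sqrt{\mathcal L}}$. In the regime $s\gtrsim|w^{-1}u|^2$ the best pointwise bound available is $|\widetilde{\mathcal Q}_s(v,w)|\lesssim s^{-Q}$ (not $s^{-Q/2}$ as for the heat kernel), and the cone integral $\bigl(\int_{|w^{-1}u|^2/4}^{\infty}\int_{|u^{-1}v|<\sqrt{s}}s^{-2Q}\,dv\,ds/s^{Q/2+1}\bigr)^{1/2}\approx|w^{-1}u|^{-2Q}$ rather than the factor $|w^{-1}u|^{-Q}$ that the dyadic-annulus/geometric-series argument requires; the resulting estimate has the wrong scaling in $r$ and cannot yield \eqref{Main3}--type uniformity over all balls. (This problem does not occur for $\mathfrak{g}_{\sqrt{\mathcal L}}$, whose measure $ds/s$ is scale invariant; it is specific to the area integral.) Second, the conversion of $[1+s/\rho(w)]^{-N}$ into $[1+|w^{-1}u|/\rho(w)]^{-N}$ in the near regime costs a factor $(|w^{-1}u|/s)^{N}$, which in the heat-kernel proof is absorbed by borrowing $(\sqrt{s}/|v^{-1}u|)^{N}$ from the Gaussian for arbitrarily large $N$; the polynomial Poisson decay of fixed order $Q+1$ can supply only one such power, while the proof needs $N\geq\theta^{*}$ with $\theta^{*}$ arbitrary. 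Both difficulties disappear if you keep the subordination integral outside the square function and reduce to $\mathcal{S}_{\mathcal L}$, as the paper does.
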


\section{Further remarks}\label{sec5}
In the last section, let us give the definitions of the generalized Morrey spaces. Let $\Phi=\Phi(r),r>0$, be a growth function; that is, a positive increasing function on $(0,\infty)$ and satisfy the following doubling condition
\begin{equation*}
\Phi(2r)\leq D\,\Phi(r),
\end{equation*}
for all $r>0$, where $D=D(\Phi)\geq1$ is a doubling constant independent of $r$.
\begin{defin}
Let $\rho$ be the auxiliary function determined by $V\in RH_q$ with $q\geq Q/2$. Let $1\leq p<\infty$ and $\Phi$ be a growth function. For given $0<\theta<\infty$, the generalized Morrey space $L^{p,\Phi}_{\rho,\theta}(\mathbb H^n)$ is defined to be the set of all $p$-locally integrable functions $f$ on $\mathbb H^n$ such that
\begin{equation}\label{morrey3}
\bigg(\frac{1}{\Phi(|B(u_0,r)|)}\int_{B(u_0,r)}|f(u)|^p\,du\bigg)^{1/p}
\leq C\cdot\left[1+\frac{r}{\rho(u_0)}\right]^{\theta}
\end{equation}
holds for every ball $B(u_0,r)$ in $\mathbb H^n$, and we denote the smallest constant $C$ satisfying \eqref{morrey3} by $\|f\|_{L^{p,\Phi}_{\rho,\theta}(\mathbb H^n)}$.It is easy to see that the functional $\|\cdot\|_{L^{p,\Phi}_{\rho,\theta}(\mathbb H^n)}$ is a norm on the linear space $L^{p,\Phi}_{\rho,\theta}(\mathbb H^n)$ that makes it into a Banach space under this norm. Define
\begin{equation*}
L^{p,\Phi}_{\rho,\infty}(\mathbb H^n):=\bigcup_{0<\theta<\infty}L^{p,\Phi}_{\rho,\theta}(\mathbb H^n).
\end{equation*}
\end{defin}

\begin{defin}
Let $\rho$ be the auxiliary function determined by $V\in RH_q$ with $q\geq Q/2$. Let $p=1$ and $\Phi$ be a growth function. For given $0<\theta<\infty$, the generalized weak Morrey space $WL^{1,\Phi}_{\rho,\theta}(\mathbb H^n)$ is defined to be the set of all measurable functions $f$ on $\mathbb H^n$ such that
\begin{equation}\label{morrey4}
\frac{1}{\Phi(|B(u_0,r)|)}\sup_{\lambda>0}\lambda\cdot\big|\big\{u\in B(u_0,r):|f(u)|>\lambda\big\}\big|
\leq C\cdot\left[1+\frac{r}{\rho(u_0)}\right]^{\theta}
\end{equation}
holds for every ball $B(u_0,r)$ in $\mathbb H^n$, and we denote the smallest constant $C$ satisfying \eqref{morrey4} by $\|f\|_{WL^{1,\Phi}_{\rho,\theta}(\mathbb H^n)}$.
Correspondingly, we define
\begin{equation*}
WL^{1,\Phi}_{\rho,\infty}(\mathbb H^n):=\bigcup_{0<\theta<\infty}WL^{1,\Phi}_{\rho,\theta}(\mathbb H^n).
\end{equation*}
\end{defin}

\begin{rem}
$(i)$ As in Section \ref{def}, we can also define a norm and a quasi-norm on the linear spaces $L^{p,\Phi}_{\rho,\infty}(\mathbb H^n)$ and $WL^{1,\Phi}_{\rho,\infty}(\mathbb H^n)$, respectively.

$(ii)$ According to this definition, we recover the spaces $L^{p,\kappa}_{\rho,\theta}(\mathbb H^n)$ and $WL^{1,\kappa}_{\rho,\theta}(\mathbb H^n)$ under the choice $\Phi(t)=t^\kappa$, for all $(p,\kappa)\in[1,\infty)\times(0,1)$.

$(iii)$ In the Euclidean setting, when $\theta=0$ or $V\equiv0$, the classes $L^{p,\Phi}_{\rho,\theta}$ and $WL^{1,\Phi}_{\rho,\theta}$ reduce to the classes $L^{p,\Phi}$ and $WL^{1,\Phi}$, which were introduced and studied by Mizuhara in \cite{mizuhara}.
\end{rem}

Using the similar method as in the proofs of Theorems \ref{mainthm:1} through \ref{mainthm:4}, we can also prove the following results. The details are omitted here.
\begin{thm}\label{mainthm:9}
Let $\rho$ be as in \eqref{rho}. Let $1<p<\infty$ and $1\leq D(\Phi)<2$. If $V\in RH_q$ with $q\geq Q/2$, then the operators $\mathfrak{g}_{\mathcal L}$ and $\mathcal{S}_{\mathcal L}$ are all bounded on $L^{p,\Phi}_{\rho,\infty}(\mathbb H^n)$.
\end{thm}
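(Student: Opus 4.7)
The plan is to adapt the arguments already carried out for Theorems \ref{mainthm:1}--\ref{mainthm:4} to the generalized Morrey setting, with the condition $D(\Phi)<2$ playing the role that was played by $1-\kappa>0$ in the earlier proofs. Fix $f\in L^{p,\Phi}_{\rho,\infty}(\mathbb H^n)$, say $f\in L^{p,\Phi}_{\rho,\theta^*}(\mathbb H^n)$ for some $\theta^*>0$ with $\|f\|_{L^{p,\Phi}_{\rho,\infty}(\mathbb H^n)}=\|f\|_{L^{p,\Phi}_{\rho,\theta^*}(\mathbb H^n)}$. Fix a ball $B=B(u_0,r)$ and decompose $f=f_1+f_2$ with $f_1=f\chi_{2B}$ (for $\mathfrak{g}_{\mathcal L}$) or $f_1=f\chi_{4B}$ (for $\mathcal S_{\mathcal L}$). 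By sublinearity it suffices to show, for both operators $T\in\{\mathfrak{g}_{\mathcal L},\mathcal S_{\mathcal L}\}$, an estimate of the form
\begin{equation*}
\bigg(\frac{1}{\Phi(|B(u_0,r)|)}\int_{B(u_0,r)}|T(f)(u)|^p\,du\bigg)^{1/p}\lesssim\left[1+\frac{r}{\rho(u_0)}\right]^{\vartheta}
\end{equation*}
for some $\vartheta>0$ depending only on $\theta^*$, $N_0$ and $\Phi$.

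For the local part involving $f_1$, I would invoke the $L^p\to L^p$ boundedness furnished by Theorems \ref{strongg}(1) and \ref{strongs}(1), the defining inequality of $L^{p,\Phi}_{\rho,\theta^*}(\mathbb H^n)$ applied to the enlarged ball, and the elementary bound \eqref{2rx}. The ratio $\Phi(|2B|)/\Phi(|B|)$ (or $\Phi(|4B|)/\Phi(|B|)$) is absorbed using the doubling condition iterated finitely many times, giving a constant depending only on $D$, $Q$. This yields a bound of order $\|f\|_{L^{p,\Phi}_{\rho,\infty}(\mathbb H^n)}[1+r/\rho(u_0)]^{\theta^*}$.

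For the far part involving $f_2$, the pointwise estimates \eqref{keyWH} and \eqref{keyWH2} and their annular forms \eqref{Tf2}, \eqref{Talpha22} remain valid as they depend only on the kernel bound of Lemma \ref{kernel} and on \eqref{com2}, not on the geometry of the underlying Morrey space. Applying H\"older's inequality on each annulus $B(u_0,2^{k+1}r)$ and invoking the Morrey norm of $f$, the far part is dominated by
\begin{equation*}
C\|f\|_{L^{p,\Phi}_{\rho,\infty}(\mathbb H^n)}\Bigl[1+\tfrac{r}{\rho(u_0)}\Bigr]^{N\cdot\frac{N_0}{N_0+1}}\sum_{k=1}^\infty\Bigl(\tfrac{|B(u_0,r)|}{|B(u_0,2^{k+1}r)|}\Bigr)^{1/p}\Bigl(\tfrac{\Phi(|B(u_0,2^{k+1}r)|)}{\Phi(|B(u_0,r)|)}\Bigr)^{1/p}\Bigl[1+\tfrac{2^{k+1}r}{\rho(u_0)}\Bigr]^{-N+\theta^*}.
\end{equation*}

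The heart of the matter, and the only genuinely new step, is the summability of the resulting series. Since $|B(u_0,2^{k+1}r)|=2^{Q(k+1)}|B(u_0,r)|$, iterating the doubling condition $\Phi(2t)\leq D\Phi(t)$ a total of $Q(k+1)$ times gives $\Phi(|B(u_0,2^{k+1}r)|)\leq D^{Q(k+1)}\Phi(|B(u_0,r)|)$. Consequently the geometric factor in the $k$-th term is bounded by $(D/2)^{Q(k+1)/p}$, and the hypothesis $D<2$ is precisely what makes this geometric series convergent, exactly as the condition $\kappa<1$ did in the special case $\Phi(t)=t^{\kappa}$ (for which $D=2^{\kappa}$). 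Choosing $N\geq\theta^*$ renders the factor $[1+2^{k+1}r/\rho(u_0)]^{-N+\theta^*}\leq 1$ and setting $\vartheta=\max\{\theta^*,NN_0/(N_0+1)\}$ completes the argument. The main obstacle is essentially bookkeeping: one must verify that the doubling constant enters only at this final summation step and produces no hidden loss elsewhere, in particular in the passage from $\rho(v)$ to $\rho(u_0)$ via \eqref{com2}. The treatment of $\mathcal S_{\mathcal L}$ is verbatim the same using \eqref{Talpha22} in place of \eqref{Tf2}.
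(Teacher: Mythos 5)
Your proposal is correct and follows exactly the route the paper intends: the paper itself omits the details of Theorem \ref{mainthm:9}, stating only that the method of Theorems \ref{mainthm:1}--\ref{mainthm:4} carries over, and your write-up supplies precisely those details. In particular, you correctly identify the one genuinely new point, namely that $|B(u_0,2^{k+1}r)|=2^{Q(k+1)}|B(u_0,r)|$ together with the iterated doubling bound $\Phi(2^{Q(k+1)}t)\leq D^{Q(k+1)}\Phi(t)$ makes the tail series a convergent geometric series with ratio $(D/2)^{Q/p}<1$, which is exactly the role played by $1-\kappa>0$ in the model case $\Phi(t)=t^{\kappa}$, $D=2^{\kappa}$.
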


\begin{thm}\label{mainthm:10}
Let $\rho$ be as in \eqref{rho}. Let $p=1$ and $1\leq D(\Phi)<2$. If $V\in RH_q$ with $q\geq Q/2$, then the operators $\mathfrak{g}_{\mathcal L}$ and $\mathcal{S}_{\mathcal L}$ are all bounded from $L^{1,\Phi}_{\rho,\infty}(\mathbb H^n)$ to $WL^{1,\Phi}_{\rho,\infty}(\mathbb H^n)$.
\end{thm}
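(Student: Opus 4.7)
The plan is to mimic the proofs of Theorems \ref{mainthm:1}--\ref{mainthm:4}, substituting $\Phi(|B(u_0,r)|)$ for $|B(u_0,r)|^{\kappa}$ throughout and leaning on the doubling condition $\Phi(2r)\leq D\,\Phi(r)$ (with $D<2$) to absorb the growth of $\Phi$ across dyadic scales. Given $f\in L^{p,\Phi}_{\rho,\infty}(\mathbb H^n)$, set $\theta^*:=\inf\{\theta>0:f\in L^{p,\Phi}_{\rho,\theta}(\mathbb H^n)\}$, fix a ball $B=B(u_0,r)$, and decompose $f=f_1+f_2$ with $f_1=f\chi_{2B}$ (or $f\chi_{4B}$ in the Lusin case) and $f_2=f-f_1$. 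For the local piece I would apply the Lebesgue boundedness of Theorems \ref{strongg} and \ref{strongs} to produce
\begin{equation*}
\bigg(\frac{1}{\Phi(|B|)}\int_B|\mathfrak{g}_{\mathcal L}(f_1)|^p\,du\bigg)^{1/p}\lesssim\frac{\Phi(|2B|)^{1/p}}{\Phi(|B|)^{1/p}}\|f\|_{L^{p,\Phi}_{\rho,\theta^*}}\bigg[1+\frac{2r}{\rho(u_0)}\bigg]^{\theta^*},
\end{equation*}
where the ratio out front is at most $D^{1/p}$ by one application of the doubling condition, while \eqref{2rx} absorbs the $2r$ into $r$.

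For the non-local piece, the pointwise estimates \eqref{Tf2} and \eqref{Talpha22} were derived without using any particular form of the Morrey weight and so remain valid; they give, for $u\in B$,
\begin{equation*}
|\mathfrak{g}_{\mathcal L}(f_2)(u)|+|\mathcal{S}_{\mathcal L}(f_2)(u)|\lesssim\sum_{k\geq1}\frac{[1+r/\rho(u_0)]^{N\cdot\frac{N_0}{N_0+1}}}{|B(u_0,2^{k+1}r)|}\bigg[1+\frac{2^{k+1}r}{\rho(u_0)}\bigg]^{-N}\int_{B(u_0,2^{k+1}r)}|f(v)|\,dv.
\end{equation*}
H\"older's inequality together with the $L^{p,\Phi}_{\rho,\theta^*}$-membership of $f$ at the scale $2^{k+1}r$ bounds the inner average by $C\|f\|_{L^{p,\Phi}_{\rho,\theta^*}}(\Phi(|B(u_0,2^{k+1}r)|)/|B(u_0,2^{k+1}r)|)^{1/p}[1+2^{k+1}r/\rho(u_0)]^{\theta^*}$. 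Dividing through by $\Phi(|B|)^{1/p}$ and combining $|B(u_0,2^{k+1}r)|=2^{(k+1)Q}|B|$ with the $(k+1)Q$-fold iteration of doubling $\Phi(|B(u_0,2^{k+1}r)|)\leq D^{(k+1)Q}\Phi(|B|)$, each term of the resulting series picks up a geometric factor $(D/2)^{(k+1)Q/p}$. Choosing $N\geq\theta^*$ kills the Schr\"odinger decay $[1+2^{k+1}r/\rho(u_0)]^{-N+\theta^*}$, and the remaining series converges precisely because $D<2$, yielding a bound dominated by $[1+r/\rho(u_0)]^{N\cdot N_0/(N_0+1)}$.

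The crux of the argument, and the only substantive difference from Theorems \ref{mainthm:1}--\ref{mainthm:4}, is this convergence step: the hypothesis $D(\Phi)<2$ is the exact quantitative replacement for $\kappa<1$, since $D(t^\kappa)=2^\kappa$ and the two conditions agree when $\Phi(t)=t^\kappa$. The weak-type conclusion in Theorem \ref{mainthm:10} is handled analogously via Chebyshev's inequality together with the weak-type $(1,1)$ bounds of Theorems \ref{strongg}(2) and \ref{strongs}(2); there the geometric factor reads $(D/2)^{(k+1)Q}$, still summable under $D<2$. Finally, the statements transfer from the heat semigroup to the Poisson semigroup $\{e^{-s\sqrt{\mathcal L}}\}_{s>0}$ by the subordination argument \eqref{subor} applied exactly as at the end of Sections \ref{sec3} and \ref{sec4}, so that $\mathfrak{g}_{\sqrt{\mathcal L}}$ and $\mathcal{S}_{\sqrt{\mathcal L}}$ are pointwise dominated (up to a constant) by $\mathfrak{g}_{\mathcal L}$ and $\mathcal{S}_{\mathcal L}$ respectively.
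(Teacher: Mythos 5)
Your proposal is correct and is exactly the argument the paper has in mind: the paper omits the proof of Theorem \ref{mainthm:10}, saying only that it follows the method of Theorems \ref{mainthm:1}--\ref{mainthm:4}, and your substitution of $\Phi(|B(u_0,r)|)$ for $|B(u_0,r)|^{\kappa}$, with the geometric factor $(D/2)^{(k+1)Q}$ obtained by iterating the doubling condition across dyadic annuli, is precisely how the hypothesis $D(\Phi)<2$ replaces $\kappa<1$. The only slip is cosmetic: since $|2B|=2^{Q}|B|$, the doubling condition must be applied $Q$ times to pass from $\Phi(|B|)$ to $\Phi(|2B|)$, so the local-term ratio is at most $D^{Q/p}$ rather than $D^{1/p}$ --- a harmless constant that does not affect the argument.
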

Note that the same conclusions of Theorems \ref{mainthm:9} and \ref{mainthm:10} also hold for the operators $\mathfrak{g}_{\sqrt{\mathcal L}}$ and $\mathcal{S}_{\sqrt{\mathcal L}}$ with respect to the Poisson semigroup $\big\{e^{-s\sqrt{\mathcal L}}\big\}_{s>0}$.

\bibliographystyle{elsarticle-harv}
\bibliography{<your bibdatabase>}
\begin{center}
References
\end{center}

\end{document}